\documentclass[10pt]{amsart}
\usepackage{mathtools,amsthm,amsfonts,amssymb,amscd,amsbsy,amsmath,graphicx,bbm,tikz}
\usetikzlibrary{decorations.pathreplacing,}


\usepackage[margin=1in]{geometry}

\def\mcb{\mathcal B}

\def\mcch{\mathcal{ CH}}

\def\N{\mathbb N}

\def\sd{\triangle}

\def\a{\alpha}

\def\zb{\hat{\textbf{0}}}

\newcommand{\inv}{^{-1}}


\usepackage{scrextend}


\newtheorem{theorem}{Theorem}[section]
\newtheorem{conjecture}{Conjecture}[section]
\newtheorem{lemma}{Lemma}[section]
\newtheorem{prop}{Proposition}[section]
\newtheorem{cor}{Corollary}[section]
\newtheorem{rem}{Remark}[section]

\theoremstyle{definition}

\newcommand{\rk}{\operatorname{rk}}

\newcommand{\Skyt}{\operatorname{Skyt}}
\newcommand{\skyt}{\operatorname{skyt}}

\newcommand{\bSkyt}{\operatorname{\overline{Skyt}}}
\newcommand{\bskyt}{\operatorname{\overline{skyt}}}

\newcommand{\smdtemp}[3]{S_{#1,#2}(#3)}
\newcommand\smd{\smdtemp{m}{d}{\mcch}}

\newcommand{\coefftemp}[3]{c_{#1,#2}^{#3}(\mcch)}
\newcommand\coeff{\coefftemp{m}{d}{i}}

\newcommand{\wo}{\setminus} 
\newcommand{\compl}{^{c}}

\newcommand{\of}{\subseteq}

\newcommand{\adj}{\sim}

\newcommand{\jnd}{J(n,d)}
\DeclarePairedDelimiter\abs{|}{|}
\DeclarePairedDelimiter\set{\{}{\}}
\DeclarePairedDelimiter\parens{(}{)}

\begin{document}
\title[Kazhdan--Lusztig polynomials of Sparse Paving Matroids]{A combinatorial formula for Kazhdan--Lusztig polynomials of Sparse Paving Matroids}

\author[K. Lee]{Kyungyong Lee}
\thanks{K.L. was partially supported by the University of Alabama, University of Nebraska--Lincoln, Korea Institute for Advanced Study, and the NSF grant DMS 1800207.}
\address{Department of Mathematics, University of Alabama, Tuscaloosa, AL 35401 U.S.A., and School of Mathematics, Korea Institute for Advanced Study, Seoul 02455 Republic of Korea}
\email{kyungyong.lee@ua.edu; klee1@kias.re.kr}

\author[G. D. Nasr]{George D. Nasr}
\address{Department of Mathematics, University of Nebraska--Lincoln, Lincoln, NE 68588, U.S.A.}
\email{george.nasr@huskers.unl.edu}
\author[J. Radcliffe] {Jamie Radcliffe}
\thanks{Jamie Radcliffe was supported in part by Simons grant number 429383.}
\address{Department of Mathematics, University of Nebraska--Lincoln, Lincoln, NE 68588, U.S.A.}
\email{jamie.radcliffe@unl.edu}
\subjclass[2010]{05B35}
\keywords{sparse paving matroids,Skew Young Diagrams,Kazhdan-Lusztig Polynomials}

\begin{abstract}
{ We prove the positivity} of Kazhdan-Lusztig polynomials for sparse paving matroids, which are known to be logarithmically almost all matroids, but are conjectured to be almost all matroids.  The positivity follows from a remarkably simple combinatorial formula we discovered for these polynomials using skew young tableaux. This supports the conjecture that Kazhdan-Lusztig polynomials for all matroids have {non-negative} coeffiecients. In special cases, such as uniform matroids, our formula has a nice combinatorial interpretation. 
\end{abstract}
\maketitle

\section{Introduction}
The Kazhdan--Lusztig polynomial of a matroid was introduced by Elias, Proudfoot, and Wakefield in 2016 \cite{firstklpolys}, which we define here. Throughout, let $M$ be a matroid, $F$ be a flat of the matroid $M$, $\rk$ be the rank function on $M$, and $\chi_M$ be the characteristic function for $M$. We denote $M^F$ (respectively $M_F$) for the localization (respectively contraction) for $M$ at $F$. Then, the Kazhdan-Lusztig polynomial for $M$, denoted $P_M(t)$ is given by the following conditions:
\begin{enumerate}
\item If $\rk M=0$, then $P_M(t)=1$.
\item If $\rk M>0$, then $\deg P_M(t)<{1\over 2} \rk M$. 
\item $\displaystyle t^{rk M}P_M(t\inv)=\sum_{F: \text{ a flat}} \chi_{M^F}(t)P_{M_F}(t)$.
\end{enumerate}

 Since their introduction, these polynomials have drawn active research efforts. Mostly, this is due to their (conjecturally) nice properties, such as non-negativity of coefficients, and real-rootedness (see \cite{firstklpolys,thag,klum,fanwheelwhirl,equithag}). There has also been much effort put into finding relations between these polynomials or generalizations thereof (see \cite{deletion,kls,pfia}). However, these polynomials have been explicitly calculated only for very special classes of matroids (for instance, see \cite{stirling,klum,klpolys,fanwheelwhirl,qniform}), and yet many of the known formulas have left much room for improvement. In particular, as of now, there is no enlightening interpretation for such coefficients. 
 
 In this paper, we provide a combinatorial formula for the Kazhdan-Lusztig polynomials of sparse paving matroids. Then we use this formula to deduce the positivity for the coefficients of these polynomials.
 
The class of sparse paving matroids is known to enjoy properties such as being dual-closed and minor-closed. However, what draws research interest to these matroids is a conjecture given by  Mayhew, Newman, Welsh, and Whittle \cite{sparsealmostall}. Based on Crapo's and Rota's prediction \cite{cr}, they conjecture that sparse paving matroids will eventually predominate in any asymptotic enumeration of matroids. That is,
 \[\lim_{n\to \infty}{s_n\over m_n}=1,\]
 where $s_n$ is the number of sparse paving matroids on $n$ elements and $m_n$ is the number of matroids on $n$ elements. {In pursuit of this conjecture, Pendavingh and van der Pol \cite{sparselog} have shown that }\[\lim_{n\to\infty} {\log s_n \over \log m_n}=1.\]
That is, so far, what we know is that logarithmically almost all matroids are sparse paving matroids.  
Hence, the fact that we are able to prove the non-negativity for coefficients of such matroids is favorable for the conjecture that all matroids have Kazhdan-Lusztig polynomials with non-negative coefficients. 
 
 There are several known characterizations of sparse paving matroids. Let $M$ be a matroid of rank $d$ so that the ground set has $m+d$ elements. Let $\mcb$ be the set of bases for $M$, so in particular $\mcb\subseteq {[m+d]\choose d}$. Set $\mcch:={[m+d]\choose d} \setminus \mcb$. Then $M$ is sparse paving if any (and hence all) of the following hold.
 \begin{enumerate}
 \item $\mcch$ is the set of circuit-hyperplanes for $M$.
 \item For distinct $C,C'\in \mcch$, we have $|C\sd C'|\geq 4$, where $C\sd C':=(C\setminus C') \cup (C'\setminus C)$ is the \textit{symmetric difference.}
 \item Every nonspanning circuit is a hyperplane.
 \item $M$ and its dual $M^*$ are both paving; that is, their circuits have cardinality at least $d$.
 \end{enumerate}
 
To this end, we let $\smd$ be the sparse paving matroid of rank $d$ with ground set $[m+d]$ so that $\mcch$ is the set of circuit-hyperplanes.
 
 The last thing we need to define before stating our main result is the object that will allow us to write our combinatorial formula for the coefficients of the polynomials. Define $\Skyt(a,i,b)$ to be the set of fillings of the following shape so that the rows and columns strictly increase with entries in $[a+b+2i-2]$.
  \begin{center}
  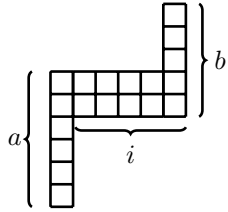
\begin{figure}[h]
  
\begin{tikzpicture}[scale=0.3, line width=1pt]
  \draw (-1,0) grid (0,6);
\draw[decoration={brace,raise=7pt},decorate]
 (-1,0) -- node[left=7pt] {$a$} (-1,6);
 
  \draw (0,4) grid (4,6);
\draw[decoration={brace,mirror, raise=4pt},decorate]
 (0.1,4) -- node[below=7pt] {$i$} (5,4);
 
  \draw (4,4) grid (5,9);
\draw[decoration={brace,mirror, raise=5pt},decorate]
 (5,4) -- node[right=7pt] {$b$} (5,9); 
 
\end{tikzpicture}
\caption{The left-most column has height $a$, followed by $i-1$ columns of height 2, followed by the right-most column of height $b$.}
  \end{figure}
  \end{center}
We define a related object which we denote $\bSkyt(i,b)$, the subset of $\Skyt(2,i,b)$ where the value 1 appears at the top of the left-most column. We set $\skyt(a,i,b):=\#\Skyt(a,i,b)$ and $\bskyt(i,b):=\#\bSkyt(i,b)$. There are some conventions for special values of $a$, $i$ and $b$, but we leave these for Section \ref{sec:SKYT}.

We are now ready to state our main result.

\begin{theorem}\label{thm:main}
Let $\coeff$ be the $i$-th coefficient for the Kazhdan-Lusztig polynomial for the sparse paving matroid $\smd$. Then
\[{\coeff}=\skyt(m+1,i,d-2i+1)-|\mcch|\cdot \bskyt(i,d-2i+1).\]
Moreover, this formula is always non-negative.
\end{theorem}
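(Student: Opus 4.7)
The plan is to prove Theorem \ref{thm:main} by strong induction on the rank $d$ of $M = \smd$, via the defining recursion
\[
t^{d}\,P_{M}(t^{-1}) \;=\; \sum_{F\ \text{flat of }M} \chi_{M^{F}}(t)\,P_{M_{F}}(t),
\]
and then to prove the non-negativity assertion by exhibiting an explicit injection of skew tableaux.

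The first step is to read off the lattice of flats of a sparse paving matroid. Since the smallest dependent set has cardinality $d$, every $k$-subset of $[m+d]$ with $k \leq d-2$ is a rank-$k$ flat. The rank $d-1$ flats split into two kinds: the circuit-hyperplanes $C \in \mcch$ (of size $d$) and the $(d-1)$-subsets of $[m+d]$ not contained in any $C\in\mcch$; the hypothesis $\abs{C\sd C'}\geq 4$ guarantees that each $(d-1)$-subset lies in at most one circuit-hyperplane, which keeps the bookkeeping clean. For each flat $F$ I then identify the localization and contraction: for a $k$-subset flat with $k\leq d-2$, $\chi_{M^{F}}(t)=(t-1)^{k}$ and $M_{F}=S_{m,d-k}(\mcch_F)$ where $\mcch_F:=\set{C\setminus F : F\of C\in\mcch}$, which is again sparse paving; the rank $d-1$ flats produce rank-one contractions with $P_{M_{F}}(t)=1$, and localization characteristic polynomials $(t-1)^{d-1}$ or $\chi_{U_{d-1,d}}(t)$ respectively.

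Substituting the inductive formula for each $P_{M_{F}}$ into the recursion and extracting the coefficient of $t^{i}$ (for $i<d/2$) reduces the theorem to two combinatorial identities involving $\skyt$ and $\bskyt$. The first identity is the $\mcch$-independent part, which must match the closed-form coefficient of $P_{U_{d,m+d}}$ recorded in \cite{qniform}. The second, $\abs{\mcch}$-linear identity collects corrections from (i) the circuit-hyperplanes and the $(d-1)$-subsets they block, and (ii) smaller $k$-subset flats $F$ contained in some $C\in\mcch$, each of which feeds an $\abs{\mcch_F}$-type correction through the inductive formula; since each $C\in\mcch$ is contained in $\binom{d}{k}$ such flats, these contributions should telescope into a clean prefactor of $\abs{\mcch}$, at which point only a pure skew-tableau identity remains.

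For non-negativity, the plan is to construct an explicit injection
\[
\Phi:\ \mcch \times \bSkyt(i,d-2i+1) \;\hookrightarrow\; \Skyt(m+1, i, d-2i+1).
\]
Given $(C,T)$, the idea is to use the $d$ elements of $C\of[m+d]$ and the complementary $m$ elements of $[m+d]\setminus C$ to fill two prescribed regions of the target shape, with $T$ (viewed in a rescaled alphabet) dictating the relative placement inside the smaller region, so that both $C$ and $T$ can be read back off the image. The main obstacle will be the identity-verification step: aggregating the $\abs{\mcch_F}\cdot\bskyt(\,\cdot\,)$ corrections coming from every class of flats and showing they collapse precisely to $\abs{\mcch}\cdot\bskyt(i,d-2i+1)$ is where the bulk of the combinatorial work should sit, and it will likely require the right bijective or generating-function identity on skew tableaux.
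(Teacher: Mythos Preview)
Your approach to the \emph{formula} half is essentially the paper's: induct on $d$ through the defining recursion, classify flats, compute localizations/contractions/characteristic polynomials, and reduce everything to two tableau identities (the uniform/$\skyt$ identity and a $\bskyt$ identity). Your organization is arguably cleaner than the paper's: you collapse the $\bskyt$ contribution to $|\mcch|\binom{d}{k}\,\bskyt(\cdot)$ by the double count $\sum_{|F|=k}|\mcch_F|=\sum_{C\in\mcch}\binom{d}{k}$, whereas the paper first stratifies flats by the exact set $I\subseteq[c]$ of circuit-hyperplanes containing them (their $\Phi_j(I)$) and then invokes a separate inclusion--exclusion lemma (their Lemma~\ref{lem:simplification}) to cancel all terms with $|J|\ge2$. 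Either way one lands on the same pair of identities, which the paper does not prove from scratch but imports from \cite{rhoremoved}; you would need to do the same or supply them.

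Your approach to \emph{non-negativity} is genuinely different and, as written, is the real gap. The paper does \emph{not} construct an injection $\mcch\times\bSkyt(i,d-2i+1)\hookrightarrow\Skyt(m+1,i,d-2i+1)$; it explicitly says (opening of Section~\ref{sec:positivity}) that outside the disjoint-$\mcch$ case it has no manifestly non-negative interpretation. Instead it proves two upper bounds on $|\mcch|$---the known $\frac{1}{m+1}\binom{m+d}{d}$ and a new $\frac{2}{m+d+2}\binom{m+d}{d}$ obtained by a discharging argument on the Johnson graph---and then grinds through the inequality $\skyt\ge(\text{bound})\cdot\bskyt$ using the closed forms of Lemmas~\ref{lem:manifposskyt} and~\ref{lem:manifposbarskyt}, with separate case analyses for $m\le2$ and $i\le2$. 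Your sketched injection (``put the $d$ elements of $C$ in one region, the $m$ complementary elements in another, let $T$ dictate placement'') has two concrete obstacles you have not addressed: (i) a tableau in $\bSkyt(i,d-2i+1)$ has $d+1$ cells on the alphabet $[d+1]$, not $d$, so the regions do not match $|C|$ and $|[m+d]\setminus C|$ as stated; (ii) distinct $C,C'\in\mcch$ may share up to $d-2$ elements, so recovering $C$ from the image---and hence disjointness of the images---is far from automatic. If you can make such an injection work it would be a strict improvement on the paper, but as it stands this is a plan rather than a proof, and the authors' explicit failure to find one suggests it is harder than your sketch indicates.
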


What is truly remarkable about this formula is that it is not effected by how the elements of $\mcch$ relate to one-another. Keep in mind that $\mcch$ could be \textit{any} set of elements so that their pairwise symmetric difference is at least 4. Given a fixed $m$, $d$, and $i$, the value of the coefficient is invariant of selection of $\mcch$ so long as $|\mcch|$ remains the same. 

When $\mcch$ is a disjoint collection, we have already shown in \cite[Proposition 2]{rhoremoved} that the formula in Theorem \ref{thm:main} has a manifestly positive interpretation.  
Consider the subset of $\Skyt(m+1,i,d-2i+1)$ satisfying at least	 one of the following three conditions.
\begin{itemize}
\item the top entry of the right-most column is 1; or

\item the bottom entry of the right-most column  is greater than $d+|\mcch|$; or

\item the third entry (from the top) of the left-most column  is less than $d+1$.
\end{itemize}
Then the size of this subset agrees with the formula we give in Theorem \ref{thm:main}. In the special case where $\mcch=\emptyset$, the second condition becomes tautological as the bottom of the right-most column is guaranteed to be at least $d+1$ for any tableaux. So when $\mcch=\emptyset$, we get the entire size of $\Skyt(m+1,i,d-2i+1)$ as our coefficient, as Theorem \ref{thm:main} indicates. Also in this case we have $\smd=U_{m,d}$, the uniform matroid of rank $d$ on $m+d$ elements. \footnote{The first (and only known) manifestly positive integral interpretation for uniform matroids was given in \cite[Remark 3.4]{klpolysequiv}, which requires possibly many Young diagrams.} 

In light of this, we have proven the following conjecture in the case of sparse paving matroids.

\begin{conjecture}
Let $M$ be a matroid of rank $d$ on $m+d$ elements, and let $c^i$ be the $i$-th coefficient for $P_M(t)$. Then \[c^i\leq c^i_{m,d}(\emptyset).\]
That is, among all matroids with rank $d$ and ground set size $m+d$, the Kazhdan-Lusztig polynomial for $U_{m,d}$ has the largest coefficients.
\end{conjecture}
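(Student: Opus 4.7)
My plan is to split the conjecture into two regimes: the sparse paving case, where Theorem~\ref{thm:main} applies directly and settles the bound, and the general case, where substantial new ideas are required.

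For the sparse paving case, if $M = \smd$ then Theorem~\ref{thm:main} yields
\[
\coeff \;=\; \skyt(m+1,i,d-2i+1) \;-\; |\mcch|\cdot \bskyt(i,d-2i+1).
\]
Because $\bskyt(i,d-2i+1)$ is a cardinality and hence non-negative, the right-hand side is a non-increasing function of $|\mcch|$, and its maximum is attained at $\mcch = \emptyset$, i.e., when $M = U_{m,d}$. This proves the conjecture within the class of sparse paving matroids, matching the observation already made immediately after Theorem~\ref{thm:main}.

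For an arbitrary matroid $M$ of rank $d$ on $m+d$ elements, my approach would be to induct on $d$ using the defining recursion $t^{d}P_M(t^{-1}) = \sum_{F} \chi_{M^F}(t)\,P_{M_F}(t)$. The strategy is to construct a rank-preserving injection $F\mapsto F'$ from flats of $M$ into flats of $U_{m,d}$ so that the inductive hypothesis bounds $P_{M_F}(t)$ by $P_{(U_{m,d})_{F'}}(t)$ coefficient-wise, and then to compare the characteristic polynomial contributions term by term. Since every subset of $[m+d]$ of size at most $d-1$ is a flat of $U_{m,d}$, there is ample room on the uniform side to construct such a matching at every rank below $d$.

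The principal obstacle is that even granting a coefficient-by-coefficient domination of the right-hand side, one must still extract the top-degree part (forced by the condition $\deg P_M < d/2$) and show that this truncation preserves the inequality. The alternating signs in the characteristic polynomials mean that naive coefficient comparisons fail, and any control must account for the cancellations occurring on both sides. Overcoming this will likely require either a positive combinatorial model for KL coefficients extending Theorem~\ref{thm:main} to a broader class of matroids, or a new structural monotonicity result tying the KL polynomial to its associated $Z$-polynomial under natural matroid operations such as circuit-hyperplane relaxation. Absent such a tool, I expect the general conjecture to remain open beyond the sparse paving case.
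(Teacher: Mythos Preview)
Your proposal is essentially correct and aligns with the paper. Note that the statement is a \emph{conjecture}, not a theorem: the paper does not prove it in full generality, only in the sparse paving case (as stated in the sentence immediately preceding the conjecture). Your argument for that case---observing that $\bskyt(i,d-2i+1)\ge 0$ so that $\coeff$ is non-increasing in $|\mcch|$ and hence maximized at $\mcch=\emptyset$---is exactly the reasoning the paper uses. Your honest assessment that the general case requires new ideas and remains open is also accurate; the inductive outline you sketch is reasonable as a heuristic but, as you note, the alternating signs in the recursion obstruct any straightforward comparison, and the paper offers no further progress on this front.
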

\noindent  This conjecture was posed by Katie Gedeon. It has no written source, but was communicated to us to Nicholas Proudfoot.
 
 It is also interesting to note that when $\mcch$ is a disjoint collection, $\smd$ can be seen to be representable. This in turn gives a combinatorial formula for the intersection cohomology Poincar\'{e} polynomial of the corresponding reciprocal plane over a finite field, thanks to \cite{firstklpolys}. In general, though, almost all sparse paving matroids are not representable. This is due in large part to Nelson \cite{almostallnonrep} who showed that asymptotically almost all matroids are not representable. In particular, his work implies that the logarithmic growth of representable matroids is bounded by a polynomial. Meanwhile, the logarithmic growth of matroids in general are known to have at least exponential growth, and so the same must be true for sparse paving matroids. 

 One final thing to note that is interesting about our formula is that if $m+1=2$ or $d-2i+1=2$, then $\skyt(m+1,i,d-2i+1)$ becomes equal to a well-known number, namely the number of polygon dissections \cite{S}. Hence, when $m+1=d-2i+1=2$, it becomes a Catalan number.\footnote{This connection to polygon dissections was already mentioned in several places, namely in Remark 1.3 in \cite{intcohom} and Remark 5.3 of \cite{klpolys}, but with the discovery of our combinatorial object, this fact follows directly from \cite{S}.}

It should be remarked that in \cite{chowring}, Braden, Huh, Matherne, Proudfoot, and Wang say that their forthcoming paper will prove the non-negativity of the coefficients for the Kazhdan-Lusztig polynomials of all matroids. However, their approach may not develop a directly computable formula for these coefficients. On the other hand, we have formulas for the tableauxs appearing in Theorem \ref{thm:main}, which means one can use our formulas to directly find what these coefficients are in the case of sparse paving matroids.

This paper proceeds as follows. In section \ref{sec:SKYT}, we further discuss the elements of $\Skyt(a,i,b)$ and $\bSkyt(i,b)$. We also bring up some important conventions and useful identities for $\skyt(a,i,b)$ and $\bskyt(a,i,b)$. In section \ref{sec:flats_contr_local_charpoly}, we discuss flats, localizations, contractions, and characteristic polynomial for $\smd$. In section \ref{sec:KL for sparse paving}, we verify the formula for the Kazhdan-Lusztig polynomial of $\smd$ given in Theorem \ref{thm:main}. We then give some useful upper bounds on $|\mcch|$ in section \ref{sec:bounds}. We use these bounds to prove the non-negative part of Theorem \ref{thm:main}, which we do in section \ref{sec:positivity}. We end the paper with some integral identities we use throughout the paper in section \ref{sec:integral_identities}.

\textit{Acknowledgments:} The authors would like to thank Nicholas Proudfoot and Jacob Matherne for their helpful comments and feedback.
 
 \section{Skew Young Tableaux}\label{sec:SKYT}
 
 Consider the following shape.
 \begin{center}
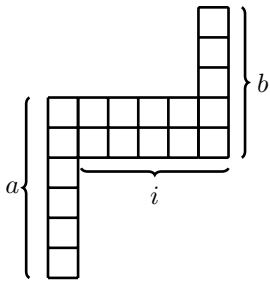
\begin{figure}[h]

\begin{tikzpicture}[scale=0.4, line width=1pt]
  \draw (-1,0) grid (0,6);
\draw[decoration={brace,raise=7pt},decorate]
 (-1,0) -- node[left=7pt] {$a$} (-1,6);
 
  \draw (0,4) grid (4,6);
\draw[decoration={brace,mirror, raise=4pt},decorate]
 (0.1,4) -- node[below=7pt] {$i$} (5,4);
 
  \draw (4,4) grid (5,9);
\draw[decoration={brace,mirror, raise=5pt},decorate]
 (5,4) -- node[right=7pt] {$b$} (5,9); 
\end{tikzpicture}
\caption{The left-most column has height $a$, followed by $i-1$ columns of height 2, followed by the right-most column of height $b$.}
\end{figure}
\end{center} 
A \textit{legal filling }of the above shape involves placing each number from $\{1,2,\dots, a+2i+b-2\}$ into the squares such that the values in the columns and rows strictly increase going down and right, respectively. Note that this is the same restriction on the entries of a standard young tableau, but the above shape does not fit the description of the typical young tableau. We refer to a legal filling of the above shape as a \textit{skew young tableau}, and denote $\Skyt(a,i,b)$ as the set of such legal fillings, and denote $\skyt(a,i,b):=\#\Skyt(a,i,b)$. 

For our tableaux to be defined, we need $a,b\geq 2$ and $i\geq 1$, but our formula in Theorem \ref{thm:main} may be used for other non-negative values of $a$, $b$, and $i$. Hence, there are some conventions we have set for the few exceptional values that can occur so that our formula still works. 
\begin{itemize}
\item If $i=0$, then $\skyt(a,i,b)=1$.
\item If $i>0$ and at least one of $a$ or $b$ is less than 2, then $\skyt(a,i,b)=0$.
\end{itemize} 

We also define a related collection of objects, which we denote $\bSkyt(i,b)$. This set is the subset of $\Skyt(2,i,b)$ so that 1 is always the entry at the top of the left-most column. The size of $\bSkyt(i,b)$ is denoted $\bskyt(i,b)$. By convention, $\bskyt(i,b)=0$ if $i=0$.

In \cite[Lemma 1]{rhoremoved}, we prove the following result.
\begin{lemma}\label{lem:skyt_count}
\[\skyt(a,i,b)={1\over i!(a-2)!(a+i-1)}\sum_{k=0}^{b-2} (-1)^k{a+b+2i-2 \choose b-2-k} {(a+2i+k)!(k+1)\over (a+i+k)(i+k+1)!}, \]
\end{lemma}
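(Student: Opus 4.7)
The plan is to identify $\Skyt(a,i,b)$ with the set of standard Young tableaux of the skew shape $\lambda/\mu$ where
\[
\lambda \;=\; ((i+1)^b,\,1^{a-2})
\quad\text{and}\quad
\mu \;=\; (i^{b-2}).
\]
In English notation this shape is a $2\times(i+1)$ rectangle with a length-$(a-2)$ vertical strip hanging below its bottom-left corner and a length-$(b-2)$ vertical strip rising above its top-right corner, exactly matching the figure. In particular $|\lambda/\mu| = n := a+2i+b-2$, matching the entry range.

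With this identification, I would apply Aitken's determinantal formula
\[
\skyt(a,i,b) \;=\; n!\,\det\!\left[\frac{1}{(\lambda_p - \mu_q - p + q)!}\right]_{p,q=1}^{a+b-2},
\]
using the convention $1/m! := 0$ for $m < 0$. The choice of $\lambda,\mu$ forces large blocks of this matrix to vanish: for $p>b$ and $q\leq b-2$, the entry $1/(1-i-p+q)!$ is zero (since $1-i-p+q < 0$), and moreover the row $p=b$ of the top-left block is identically zero. In a Laplace expansion along the first $b-2$ columns, only row-subsets $S=\{1,\dots,b-1\}\setminus\{r\}$ for $r\in\{1,\dots,b-1\}$ give a nonzero contribution, yielding exactly $b-1$ surviving terms. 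Reparametrizing $k:=r-1\in\{0,\dots,b-2\}$ gives the summation index in the stated formula, with $(-1)^k$ arising from the Laplace sign.

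The main obstacle is evaluating the two resulting minors. The $(b-2)\times(b-2)$ block inherits a staircase pattern of reciprocal factorials whose determinant collapses to a single term via Vandermonde-style cancellations. The complementary $a\times a$ cofactor factors further, because each of the $a-2$ bottom-strip rows has a single nonzero entry in its column, producing the common prefactor $\frac{1}{i!(a-2)!(a+i-1)}$; what remains is a $2\times 2$ minor involving rows $r$ and $b$, which yields the rational factor $\frac{(a+2i+k)!(k+1)}{(a+i+k)(i+k+1)!}$. The binomial $\binom{a+b+2i-2}{b-2-k}$ then appears as the recombination of the outer $n!$ with the factorials arising from these two minors. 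An alternative route, avoiding much of this bookkeeping, is induction on $b$: the base case $b=2$ reduces the shape to the straight Young diagram $((i+1)^2,1^{a-2})$ and the single-term $k=0$ formula matches the output of the hook length formula, while the inductive step tracks the position of the entry $1$, which must occupy one of the two inner corners of $\lambda/\mu$, reducing the consistency check to a routine hypergeometric identity among binomial sums.
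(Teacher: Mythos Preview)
The paper does not prove this lemma here; it simply cites \cite[Lemma~1]{rhoremoved}, the authors' earlier paper on $\rho$-removed uniform matroids, for the proof. There is therefore no in-paper argument to compare against, and I can only assess your proposal on its own merits.

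Your main approach is sound. The identification $\lambda/\mu=((i+1)^b,1^{a-2})/(i^{b-2})$ is correct, and the zero-block analysis is right: rows $b,b+1,\dots,a+b-2$ of the Aitken matrix vanish on columns $1,\dots,b-2$, so a Laplace expansion along those columns leaves exactly the $b-1$ terms indexed by $r\in\{1,\dots,b-1\}$, matching the length and sign pattern of the target sum. The one inaccurate claim is that ``each of the $a-2$ bottom-strip rows has a single nonzero entry in its column'' in the complementary $a\times a$ minor. In fact row $p\in\{b+1,\dots,a+b-2\}$ has entry $1/(1-p+q)!$, which is nonzero for \emph{all} $q\ge p-1$; the block is upper-Hessenberg, not one-entry-per-row. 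What is true is that the \emph{first} column of that $a\times a$ block vanishes on all of these rows, so a further cofactor expansion along it reduces the problem to two $(a-1)\times(a-1)$ Hessenberg minors. These do ultimately collapse to the stated rational factors, but the bookkeeping is heavier than you indicate.

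Your alternative induction on $b$ has a more serious gap. Tracking the entry $1$ gives two inner corners, $(1,i+1)$ and $(b-1,1)$. Removing $(1,i+1)$ yields $\Skyt(a,i,b-1)$ as you say, but removing $(b-1,1)$ does \emph{not} produce another $\Skyt$ shape: the top of the left column drops to row $b$ while the middle columns still start at row $b-1$, so the recursion does not close within the $\Skyt$ family. You would need to enlarge the family of shapes (and formulas) you induct over, which is substantially more than a ``routine hypergeometric identity.''
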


Using the proof of this result, it is not difficult to achieve the following identity by setting $a=2$ and replacing the $a+2i+b-2$ in Lemma \ref{lem:skyt_count} with $a+2i+b-3$. 

\begin{lemma}\label{lem:bsyt_count}
\[\bskyt(i,b)={1\over (i+1)!}\sum_{k=0}^{b-2} (-1)^k{b+2i-1 \choose b-2-k} {(2i+k+2)!(k+1)\over (i+k+2)!}, \]
\end{lemma}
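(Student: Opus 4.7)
The plan is to derive Lemma~\ref{lem:bsyt_count} as a small modification of the computation establishing Lemma~\ref{lem:skyt_count}. I would first observe that $\bSkyt(i,b)$ is in natural bijection with the strictly increasing fillings of the diagram obtained from the shape of $\Skyt(2,i,b)$ by deleting its top-left cell, using the labels $\{2,\ldots,2i+b\}$; after relabeling these as $\{1,\ldots,2i+b-1\}$, I am counting standard tableaux of a skew shape with one fewer cell and one fewer label than the original. The bijection is clean because the top-left cell is an inner corner of the skew shape, so forcing a $1$ there is automatically compatible with the strictly increasing condition on rows and columns regardless of the rest of the filling.

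Next I would revisit the proof of Lemma~\ref{lem:skyt_count} given in \cite{rhoremoved} and track precisely where the parameter $a+b+2i-2$ (the total number of labels) enters. As the shape of the formula suggests, I expect it to appear in exactly one place, namely the binomial coefficient $\binom{a+b+2i-2}{b-2-k}$, which records the distribution of labels between two subregions of the shape. All the remaining factors in the formula reflect purely local data about the shape itself (the lengths of the left and right columns, the width of the middle strip, and the alternating sum coming from inclusion--exclusion), and so they are unaffected by which specific label occupies the deleted cell. Removing that one cell, and therefore one label from the pool, then changes the binomial coefficient to $\binom{a+b+2i-3}{b-2-k}$ and leaves every other factor intact.

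Finally I would substitute $a=2$ into the resulting expression. The prefactor $\frac{1}{i!(a-2)!(a+i-1)}$ collapses to $\frac{1}{i!(i+1)}=\frac{1}{(i+1)!}$, the binomial becomes $\binom{b+2i-1}{b-2-k}$, and in the summand the identity $(a+i+k)(i+k+1)!=(i+k+2)!$ at $a=2$ simplifies the rational factor to $\frac{(2i+k+2)!(k+1)}{(i+k+2)!}$. Comparing with the stated expression completes the derivation.

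The main obstacle is confirming that $a+b+2i-2$ really does enter the proof of Lemma~\ref{lem:skyt_count} only through the single binomial coefficient above. If the original argument bundled this parameter into several factors (for instance via a global $n!$ in a Jacobi--Trudi-type expansion), the reduction would not be as painless as outlined; pinning down that it appears exactly once requires a careful reading of the argument in \cite{rhoremoved}, but once this is verified the substitution is purely mechanical.
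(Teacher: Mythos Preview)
Your proposal is correct and matches the paper's own argument essentially word for word: the paper simply asserts that the identity follows by ``setting $a=2$ and replacing the $a+2i+b-2$ in Lemma~\ref{lem:skyt_count} with $a+2i+b-3$,'' which is precisely the substitution you describe after deleting the forced top-left cell. Your write-up is in fact more careful than the paper's, since you spell out the bijection with fillings of the truncated shape and flag the one point that needs checking (that the total label count enters the proof of Lemma~\ref{lem:skyt_count} only through the binomial factor); the paper leaves all of this implicit.
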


One can achieve two formulas for $\skyt(a,i,b)$ and $\bskyt(i,b)$  that avoids alternating sums. We will need a few integral identities to produce these formulas. These identities can be found in section \ref{sec:integral_identities}, but are referenced as they are needed in the proofs that follow. Throughout, $(x)^{(n)}$ is the rising factorial $(x)(x+1)\cdots(x+n-1)$ for integers $x$ and $n$.

We start with the formula for $\skyt(a,i,b)$.

\begin{lemma}\label{lem:manifposskyt}
\[\skyt(a,i,b)={a+i-2\choose i}{a+b+2i-2\choose b+i-1}\sum_{k=0}^{b-2}{{b+i-k-3\choose i-1}\over {a+i+k\choose k+1}}\]
\end{lemma}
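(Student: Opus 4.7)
The plan is to derive the manifestly positive formula in Lemma~\ref{lem:manifposskyt} directly from the alternating-sum formula of Lemma~\ref{lem:skyt_count}. The two expressions have the same left-hand side, so the task is purely analytic: convert
\[\sum_{k=0}^{b-2} (-1)^k{a+b+2i-2 \choose b-2-k} \cdot {(a+2i+k)!\,(k+1)\over (a+i+k)\,(i+k+1)!}\]
into a non-alternating sum, and check that the constant prefactors match.

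First I would clean up the summand. The ratio $\tfrac{(a+2i+k)!(k+1)}{(a+i+k)(i+k+1)!}$ is really a (signed) binomial coefficient in disguise; rewriting it as $\tfrac{(k+1)(a+2i+k)!}{(a+i+k)(i+k+1)!}$ and pulling out $(a+2i-1)!$ and $i!$ from the prefactor $\tfrac{1}{i!(a-2)!(a+i-1)}$, one can express each term as a product of binomial coefficients (with the factor $a+b+2i-2\choose b-2-k$ carrying the $k$-dependent sign). This also reveals the natural common factor ${a+i-2\choose i}{a+b+2i-2\choose b+i-1}$ that appears out front in the target formula.

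Next, to eliminate the $(-1)^k$, I would use one of the beta-type integral identities of Section~\ref{sec:integral_identities} to replace a reciprocal binomial coefficient (or equivalently a factorial quotient) appearing in the summand by an integral of the form $\int_0^1 x^{\alpha}(1-x)^{\beta}\,dx$. After interchanging the (finite) sum and the integral, the resulting alternating sum inside the integral collapses via the binomial theorem, most likely to something of the form $(1-x)^{N}$ or $x^{\alpha}(1-x)^{\beta}$, because the remaining $k$-dependence will have been engineered to match $x^k$. The outer integral can then be expanded back as a \emph{non-alternating} sum by using a second identity from Section~\ref{sec:integral_identities} (the complementary direction of the same beta-integral machinery), yielding a sum over $k$ whose terms are products of positive binomial coefficients. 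A final bookkeeping step should display the claimed prefactor ${a+i-2\choose i}{a+b+2i-2\choose b+i-1}$ and the summand $\tfrac{\binom{b+i-k-3}{i-1}}{\binom{a+i+k}{k+1}}$.

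The computation is in principle mechanical, but the main obstacle will be the factorial bookkeeping: the indices $a-2$, $a+i-1$, $a+2i+k$, $a+i+k$, $i+k+1$, $b+i-k-3$ all need to land in exactly the right spots, and a single off-by-one in the choice of integral identity would derail the match. To mitigate this I would first verify the identity on boundary cases (e.g., $i=1$, or small $b$), which reduces everything to a very short sum and makes it easy to pin down the correct form of the integral representation to use in the general argument.
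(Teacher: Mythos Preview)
Your plan is correct and matches the paper's approach: the paper introduces $f(x,y)=C\,xy^{a+i-1}(1-xy)^{b-2}$ and applies $\int_0^1\,dy$ followed by $i$ iterated $x$-integrals, noting that binomially expanding $(1-xy)^{b-2}$ first recovers the alternating sum of Lemma~\ref{lem:skyt_count}, while applying Corollary~\ref{cor:int_0^1_y^a(1-xy)^b} and Proposition~\ref{prop:int_i_times} directly yields the positive sum. This is precisely your ``represent a factorial quotient by an integral, collapse the alternating sum via the binomial theorem, re-expand positively'' strategy; the only detail to flag is that the denominator factor $(a+i+k)(k+2)^{(i)}$ requires both a $y$-integral and the iterated $x$-integral of Proposition~\ref{prop:int_i_times} rather than a single beta integral.
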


\begin{proof}
One can rewrite Lemma \ref{lem:skyt_count} as
\begin{align}
\skyt(a,i,b)&={(a+b+2i-2)!\over i!(a-2)!(a+i-1)(b-2)!}\sum_{k=0}^{b-2} (-1)^k {b-2\choose k}{1\over (a+i+k)(k+2)^{(i)}}.\label{eq:skytalt}
\end{align}

We can recover this sum for $\skyt(a,i,b)$ by applications of integrals to a polynomial. Let \[\displaystyle f(x,y)={(a+b+2i-2)!xy^{a+i-1}(1-xy)^{b-2}\over i!(a-2)!(a+i-1)(b-2)!}.\] 
Our integrals our broken up into three parts.
\begin{enumerate}
\item[(a)] First find $g(x)$, where $g(x):=\displaystyle\int_0^1 \ f(x,y) \ dy$; then
\item[(b)] find $\displaystyle h_{i-1}(x_{i-1}):=\int_0^{x_{i-1}}h_{i-2}(x_{i-2})\ dx_{i-2}$, where $\displaystyle h_1(x_1):=\int_0^{x_1} g(x_0) \ dx_0$ and $x_0,x_1,\dots, x_{i-1}$ are $i$ variables; then 
\item[(c)] solve $\displaystyle\int_0^1 h_{i-1}(x_{i-1})\ dx_{i-1}$.
\end{enumerate} 

It is not difficult to show that, if $(1-xy)^{b-2}$ is written using the binomial expansion, part (c) will give the equation for $\skyt(a,i,b)$ found in equation \eqref{eq:skytalt} above. To get the statement of Lemma \ref{lem:manifposskyt}, we apply these three steps to $f(x,y)$ directly as written.

First, we use Corollary \ref{cor:int_0^1_y^a(1-xy)^b} to do part (a).
\[g(x):=\int_0^1f(x,y)\ dy={(a+b+2i-2)!(a+i-1)!\over i!(a-2)!(a+i-1)}\sum_{k=0}^{b-2}{(1-x)^{b-k-2}x^{k+1}\over (a+i+k)!(b-k-2)!}.\]

To complete parts (b) and (c) we apply Proposition \ref{prop:int_i_times} to get

\[\int_0^1 h_{i-1}(x_{i-1}) \ dx_{i-1} = {(a+b+2i-2)!(a+i-1)!\over i!(a-2)!(a+i-1)(i-1)!(b+i-1)!}\sum_{k=0}^{b-2}{(b+i-k-3)!(k+1)!\over (a+i+k)!(b-k-2)!}\]

This gets us a manifestly positive sum, and all that is left to get our desired result is to perform some algebraic manipulations. One can combine the terms $(b+i-k-3)!$, $(b-k-2)!$, and $(i-1)!$ combine to give $\displaystyle {b+i-k-3 \choose i-1}$. Then combine $(a+i-1)!$, $(k+1)!$, and $(a+i+k)!$ to get $\displaystyle {a+i+k\choose k+1}$. Then scale by ${(a+i-2)!\over (a+i-2)!} $ allows us to group the remaining factors into binomial coefficients giving 
\[{a+i-2\choose i}{a+b+2i-2\choose b+i-1}\sum_{k=0}^{b-2}{ {b+i-k-3 \choose i-1}\over {a+i+k\choose k+1}}. \qedhere\]
\end{proof}

\begin{rem} \label{rem:rewriting}
\leavevmode
\begin{enumerate}
\item While having a manifestly positive formula for $\skyt(a,i,b)$ is nice, it is unfortunate that, in general, the terms of the sum in Lemma \ref{lem:manifposskyt} are not necessarily integers, even if you scale them by ${a+i-2\choose i}$ and ${a+b+2i-2 \choose b+i-1}$. 
\item It will be useful to rewrite Lemma \ref{lem:manifposskyt} using a common denominator. We can do this by rewriting the binomials in the sum using the falling factorial $(x)_{(n)}:=x(x-1)\cdots(x-n+1)$. Rewriting the sum gives

\begin{align*}
&\sum_{k=0}^{b-2}{{b+i-k-3\choose i-1}\over {a+i+k\choose k+1}}\\
&=\sum_{k=0}^{b-2}{(b+i-k-3)_{b-k-2}(k+1)!\over (b-k-2)! (a+i+k)_{(k+1)}}\\
&={1\over (b-2)! (a+i+b-2)_{(b-1)}}\sum_{k=0}^{b-2}(b+i-k-3)_{b-k-2}(k+1)!(b-2)_{(k)}(a+i+b-2)_{(b-k-2)}
\end{align*}
We will find this version useful later, though it is not as concise as the original formula.
\end{enumerate}
\end{rem}

Using similar methods, we can find a formula for $\bskyt(i,b)$ which not only avoids an alternating sum, but is in fact a single term.
\begin{lemma}\label{lem:manifposbarskyt}
\[\bskyt(i,b)={2(b+2i-1)!\over (i+1)!(i-1)!(b-2)!(b+i)(b+i-2)}\]
\end{lemma}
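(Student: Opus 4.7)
The plan is to mimic the strategy used for Lemma \ref{lem:manifposskyt}. Starting from Lemma \ref{lem:bsyt_count}, I would rewrite the alternating sum as an iterated integral of a suitable polynomial. In analogy with the proof of Lemma \ref{lem:manifposskyt}, the natural integrand is essentially $f(x,y) = C \cdot x y^{i+1}(1-xy)^{b-2}$ for an appropriate constant $C$, chosen so that the binomial expansion of $(1-xy)^{b-2}$ combined with the $i$-fold iterated integration in $x$ reproduces the alternating sum of Lemma \ref{lem:bsyt_count} (after accounting for the substitution $a=2$ and the shift from $a+2i+b-2$ to $a+2i+b-3$ noted in the remark preceding that lemma).

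Next, I would carry out the inner integral in $y$ using Corollary \ref{cor:int_0^1_y^a(1-xy)^b}, producing a sum of terms of the form $(1-x)^{b-k-2} x^{k+1}$ with prefactors involving factorials. Then I would apply Proposition \ref{prop:int_i_times} to handle the $i$-fold iterated $x$-integrals, obtaining a finite sum with positive summands, in exact parallel with Lemma \ref{lem:manifposskyt}.

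The crucial (and distinguishing) step is that, unlike Lemma \ref{lem:manifposskyt}, the resulting sum is expected to collapse to a single term. There are two plausible ways to make this happen. One is to recognize the sum, after this integration, as a terminating hypergeometric series that admits a closed-form evaluation (a Saalschützian ${}_3F_2$ or a Chu--Vandermonde type identity), reflecting the special simplicity of the $a=2$ case. A cleaner alternative is to bypass the integral approach entirely: the shape defining $\bSkyt(i,b)$, once the fixed top-left $1$ is removed and the remaining labels are shifted, is a \emph{skew shape} whose standard fillings are counted by the Aitken/Jacobi--Trudi determinant, and for this particular shape (consisting of a short row, a height-$2$ strip, and a long column meeting at a single cell) the determinant collapses to a single product, which should directly yield
\[
\bskyt(i,b) = \frac{2(b+2i-1)!}{(i+1)!(i-1)!(b-2)!(b+i)(b+i-2)}.
\]

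The main obstacle will be showing the collapse from the $b-1$ term sum to a single term (if one follows the integral approach), or carefully verifying the skew-shape determinant identity (if one uses the second approach). As a sanity check I would verify small cases (e.g.\ $i=1$, where the right-hand side reduces to $\binom{b+1}{2}-1$, and $b=2$, where the shape degenerates) and confirm agreement with Lemma \ref{lem:bsyt_count}.
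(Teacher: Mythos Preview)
Your integral-only plan has a concrete gap at the very first step: the integrand $C\cdot x y^{i+1}(1-xy)^{b-2}$ does \emph{not} reproduce the alternating sum of Lemma~\ref{lem:bsyt_count}. After rewriting that lemma (as the paper does) one gets
\[
\bskyt(i,b)=\frac{(b+2i-1)!}{(i+1)!(b-2)!}\sum_{k=0}^{b-2}(-1)^k\binom{b-2}{k}\,\frac{2i+k+2}{(k+2)^{(i+1)}},
\]
and the linear-in-$k$ factor $2i+k+2$ in the numerator cannot be produced by integrating a monomial $y^{i+1+k}$ over $[0,1]$; that only yields $1/(i+k+2)$, which combined with the $x$-integrals gives $1/(k+2)^{(i+1)}$ and misses the extra factor entirely. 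No choice of constant $C$ repairs this, since the discrepancy depends on $k$.

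The paper handles exactly this factor by replacing your $y$-integral with a $y$-\emph{derivative}: it takes $f(x,y)\propto x\,y^{2i+2}(1-xy)^{b-2}$ and computes $g(x)=\left.\dfrac{d}{dy}f(x,y)\right|_{y=1}$. Differentiating $y^{2i+2+k}$ brings down $2i+2+k$, matching the numerator, while the product rule applied to $f$ before expanding $(1-xy)^{b-2}$ produces just \emph{two} terms, each already of the form $x^a(1-x)^{b'}$. Proposition~\ref{prop:int_i_times} then evaluates the iterated $x$-integrals of each term as a single closed product, and a short algebraic combination gives the stated formula. In other words, the ``collapse from a $(b-1)$-term sum to a single term'' that you flag as the main obstacle never arises in the paper's argument; the derivative trick is precisely what sidesteps it. Your hook-length/determinantal alternative is a genuinely different route and may well succeed, but it is not what the paper does and would need to be carried out in full. (Minor: your $i=1$ sanity check is off---the formula gives $\bskyt(1,b)=b$, consistent with $\bskyt(1,d-1)=d-1$ used later in the paper, not $\binom{b+1}{2}-1$.)
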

\begin{proof}
One can rewrite Lemma \ref{lem:manifposbarskyt} as 
\begin{align}\label{eq:alt_bskyt}
\bskyt(i,b)={(b+2i-1)!\over (i+1)!(b-2)!}\sum_{k=0}^{b-2}(-1)^k{b-2\choose k} {(2i+k+2)\over (k+2)^{(i+1)}}.
\end{align}
We can recover this sum for $\bskyt(a,i,b)$ by applications of a derivative and integrals to a polynomial. Let
\[\displaystyle f(x,y)={(b+2i-1)!xy^{2i+2}(1-xy)^{b-2}\over (i+1)!(b-2)!}.\] 

We break up our plan for applications of a derivative and integrals into three parts. 

\begin{enumerate}
\item[(a)] First solve $g(x):=\displaystyle\left. {d\over dy} f(x,y) \ \right|_{y=1}$; then
\item[(b)] find $\displaystyle h_{i}(x_{i}):=\int_0^{x_{i}}h_{i-1}(x_{i-1})\ dx_{i-1}$, where $\displaystyle h_1(x_1):=\int_0^1 g(x_0) \ dx_0$ and $x_0,x_1,\dots, x_i$ are $i+1$ variables; then finally
\item[(c)] find $\displaystyle\int_0^1 \ h_i(x_i) \ dx_i$.
\end{enumerate} 
If one writes $(1-xy)^{b-2}$ using the binomial expansion, part (c) outputs the equation for $\bskyt$ found in equation \eqref{eq:alt_bskyt} above. We claim that leaving $f(x,y)$ as written and then applying these three steps lead to the statement of Lemma \ref{lem:manifposbarskyt}.

 First, for part (a) observe that 
\[g(x)=\left.{d\over dy} f(x,y)\right|_{y=1}={2(i+1)(b+2i-1)!\over (i+1)!(b-2)!}x(1-x)^{b-2}-{(b-2)(b+2i-1)!\over (i+1)!(b-2)!}x^2(1-x)^{b-3}.\]

We do parts (b) and (c) simultaneously due to Proposition \ref{prop:int_i_times}. This gives 
\begin{align*}
\int_0^1 h_i(x_i)\ dx_i&={2(i+1)(b+2i-1)!\over (i+1)!(b-2)!}{(b-2+i)!\over i!(b+i)!}-{(b-2)(b+2i-1)!\over (i+1)!(b-2)!}{2(b-3+i)!\over i!(b+i)!}\\
&={2(b+2i-1)!(b+i-3)![(i+1)(b-2+i)-(b-2)]\over  i!(i+1)!(b+i)!(b-2)!}\\
&={2i(b+2i-1)!(b+i-3)!(b+i-1)\over i!(i+1)!(b+i)!(b-2)!}\\
&={2(b+2i-1)!\over (i+1)!(i-1)!(b-2)!(b+i)(b+i-2)} \qedhere
\end{align*}

\end{proof}

\section{Flats, Contractions, Localizations, and Characteristic Polynomials for $\smd$}\label{sec:flats_contr_local_charpoly}\leavevmode

Throughout, let $F$ be a flat, that is, a set which is maximal with respect to its rank. For a matroid $M$, recall that $M^F$ (respectively, $M_F$) denotes the localization (respectively, contraction) of $M$ at $F$. By $M^F$, we mean the matroid with ground set $F$, whose independent sets are those subsets of $F$ that are also independent in $M$. By $M_F$, we mean the matroid with ground set $M\setminus F$, whose independent sets are those subsets whose union with a basis for $F$ is independent in $M$. 

First, we discuss the flats of $\smd$. It is an elementary exercise to verify the following.

\begin{prop}
The flats of $\smd$ are 
\begin{enumerate}
\item the sets of cardinality at most $d-2$;
\item the sets of cardinality $d-1$ not contained in any element of $\mcch$;
\item the elements of $\mcch$;
\item $[m+d]$.
\end{enumerate}
\end{prop}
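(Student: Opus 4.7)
My plan is to first write down the rank function of $\smd$ explicitly, and then check the flat criterion $\rk(F\cup\{x\})>\rk(F)$ for every $x\notin F$ by case analysis on $|F|$.

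For the rank function I would record three facts. Since every circuit of a sparse paving matroid has size at least $d$, any subset of size at most $d-1$ is independent, with rank equal to its cardinality. A subset $F$ of size $d$ is either a basis, with $\rk(F)=d$, or an element of $\mcch$, in which case $F$ is a circuit and $\rk(F)=d-1$. Finally, every subset of size at least $d+1$ has rank $d$; it suffices to check this for a $(d+1)$-set $S$, whose $d+1$ many $d$-subsets have pairwise symmetric difference exactly $2$, so by the defining property of $\mcch$ at most one of them can lie in $\mcch$, leaving at least one basis inside $S$.

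Given this, the four cases fall out quickly. If $|F|\leq d-2$, then $|F\cup\{x\}|\leq d-1$ for every external $x$, so the rank strictly increases and $F$ is a flat. If $|F|=d-1$, then $\rk(F\cup\{x\})=d-1$ precisely when $F\cup\{x\}\in\mcch$, so $F$ is a flat exactly when it is contained in no element of $\mcch$. If $|F|=d$ with $F\in\mcch$, then $\rk(F)=d-1$ while every $F\cup\{x\}$ has size $d+1$ and rank $d$, so $F$ is a flat; if instead $F$ is a basis, then $\rk(F)=d$ already and no extension raises the rank, so $F$ is not a flat. If $|F|\geq d+1$, then $\rk(F)=d$ is maximal, so the only flat in this range is $[m+d]$, for which there is no $x$ to add.

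The only load-bearing step is the observation that every $(d+1)$-subset has rank $d$, which is a two-line consequence of the sparse paving hypothesis on pairwise symmetric differences in $\mcch$. Everything else is bookkeeping against the rank formula, which is why the author calls the result an elementary exercise.
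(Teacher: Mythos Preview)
Your proof is correct and is precisely the elementary verification the paper leaves to the reader: compute the rank function of $\smd$ from the sparse paving hypothesis, then check the closure condition case by case on $|F|$. The only point worth a second glance is the claim that every $(d{+}1)$-set contains a basis, and your symmetric-difference argument for that is exactly right.
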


With this, we can now discuss the localizations and contractions of $\smd$. First, recall the localizations and contractions of $U_{m,d}$, the uniform matroid of rank $d$ with groundset $[m+d]$.
\[(U_{m,d})^F=\begin{cases} U_{m,d} & F=[m+d]\\ U_{0,|F|} & F\neq [m+d] \end{cases},\]
and 
\[(U_{m,d})_F=\begin{cases} U_{0,0} & F=[m+d]\\ U_{m,d-|F|} & F\neq [m+d] \end{cases}.\]


The corresponding equations for $\smd$ can also be described in a similar manner. In what follows, if $F$ is a flat, then we define $\mcch(F):=\{C\setminus F: C\in \mcch \text{ such that }F\subseteq C\}$. It is worth noting that if $\mcch$ is the set of circuit-hyperplanes for a sparse paving matroid, then so is $\mcch(F)$, so long as $F$ is strictly contained in some circuit-hyperplane. One way to check this is by verifying $\mcch(F)$ satisfies the condition that any pair has symmetric difference at least 4.

\begin{prop}\label{prop:rest_local_orum}
\[\smd^F=\begin{cases} \smd & F=[m+d]\\ U_{1,d-1} & F\in \mcch \\ U_{0,|F|} & \textit{otherwise}  \end{cases}\]
and 
\[\smd_F=\begin{cases} \smd & F=\emptyset\\  U_{m-1,1} & F\in \mcch\\ \smdtemp{m}{d-|F|}{\mcch(F)} & \emptyset\subsetneq F\subsetneq C, \text{ for some $C\in \mcch$}\\ (U_{m,d})_F & \textit{otherwise.}  \end{cases}\]

\end{prop}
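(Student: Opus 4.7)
The plan is to argue by case analysis, taking each type of flat from the preceding proposition in turn and applying the definitions of localization and contraction directly. For the localization $\smd^F$, the case $F = [m+d]$ is immediate. For $F \in \mcch$, I would note that $F$ is a circuit of $\smd$ with $|F| = d$, and it is the only circuit contained in $F$ since all other circuits have size at least $d$; the $(d-1)$-subsets of $F$ are exactly the independent sets of maximal size in $F$, giving $\smd^F = U_{1,d-1}$. The remaining flats have size at most $d-1$ and contain no circuit, so every subset is independent and $\smd^F = U_{0,|F|}$.

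For the contractions, $F = \emptyset$ is immediate, and in every other case my first step would be to pin down $\rk F$: it equals $d-1$ when $F \in \mcch$ (since $F$ is a hyperplane of size $d$), and equals $|F|$ otherwise (since $|F| \leq d-1$ and no subset of this size is a circuit). This fixes the rank and ground-set size of $\smd_F$ to match the claim in each case. For $F \in \mcch$, the ground set of $\smd_F$ has size $m$ and rank $1$. For any $x \in [m+d] \setminus F$ and any basis $B$ of the restriction to $F$ (a $(d-1)$-subset of $F$), the set $B \cup \{x\}$ has size $d$ and meets $F$ in $d-1$ elements; if $B \cup \{x\} \in \mcch$, its symmetric difference with $F$ would be $2$, contradicting the sparse paving axiom. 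Hence $\{x\}$ is independent and $\smd_F = U_{m-1,1}$.

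When $\emptyset \subsetneq F \subsetneq C$ for some $C \in \mcch$, the bases of $\smd_F$ are the $(d-|F|)$-subsets $S$ of $[m+d]\setminus F$ with $S \cup F \notin \mcch$; the non-bases are therefore exactly the sets $C'\setminus F$ over $C' \in \mcch$ containing $F$, i.e., the family $\mcch(F)$. In the final ``otherwise'' case no element of $\mcch$ contains $F$, so every $(d-|F|)$-subset of $[m+d]\setminus F$ completes $F$ to a basis, yielding $U_{m,d-|F|} = (U_{m,d})_F$.

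The main delicate step is the third contraction case, where I must verify that $\mcch(F)$ is a legitimate set of circuit-hyperplanes of a sparse paving matroid $\smdtemp{m}{d-|F|}{\mcch(F)}$. This reduces to checking that pairwise symmetric differences among elements of $\mcch(F)$ remain at least $4$, which follows because deleting the common set $F$ from two supersets does not change their symmetric difference. Once this is in place, every case matches the claim by direct inspection; the rest of the argument is just careful bookkeeping of sizes and the repeated observation that, in a sparse paving matroid, a set of size $d$ is a non-basis only when it is already listed in $\mcch$.
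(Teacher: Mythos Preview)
Your proof is correct and follows essentially the same case-by-case approach as the paper: both argue the localization directly from independence of small sets, and both handle the contraction at $F\in\mcch$ by noting that any $d$-set of the form $B\cup\{x\}$ with $B\in\binom{F}{d-1}$ has symmetric difference $2$ with $F$ and hence cannot lie in $\mcch$. You are slightly more explicit than the paper in recording ranks and in checking that $\mcch(F)$ satisfies the symmetric-difference condition (the paper notes this fact just before the proposition rather than inside the proof), but the substance is the same.
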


\begin{proof}
For the localization, the only new case necessary to mention in comparison to the uniform case is for $F\in \mcch$; the other cases follow from the uniform case. The localization of this matroid at $F$ treats $F$ as the ground set, with independent sets being those that are independent in $\smd$. We know every \textit{proper} subset of $F$ is independent, giving $ U_{1,d-1}$.

Now for the contraction. If we have $F\nsubseteq C$ for all $C\in \mcch$, then the structure of $\smd_F$ is exactly that of $(U_{m,d})_F$. 
For the case where $F\in \mcch$, we want the subsets of $S:=[m+d]\setminus F$ such that their union with a basis for $F$ is independent in $\smd$. The bases for $F$ are the elements of ${F\choose d-1}$. Note if $B\in {[m+d]\choose d}$ satisfies $|B\triangle F|=2$, then $B$ is independent in $\smd$. This means the desired subsets of $S$ are the empty set and every singleton of $S$. This gives a matroid isomorphic to $U_{m-1,1}$. 
Finally, when $\emptyset \subsetneq F\subsetneq C$, for some $C\in \mcch$, note that $F$ is independent, and hence a basis for itself. Thus, the independent sets for $\smd_F$ are the subsets $X$ of $[m+d]\setminus F$ so that $X\cup F$ is independent in $\smd$. That is, $|X|\leq d-|F|$. When $|X|<d-|F|$, $|X\cup F|<d$ and every subset of $[m+d]$ of size smaller than $d$ is independent. When $|X|=d-|F|$, $X\cup F$ is a basis for $\smd$ if and only if $X\cup F\neq C$, for any $C\in \mcch$, which is true if and only if $X\notin \mcch(F)$. That is, we get a matroid isomorphic to $\smdtemp{m}{d-|f|}{\mcch(F)}$. 
\end{proof}

With these in mind, we can now compute the characteristic equation for all localizations for $\smd$. However, by Proposition \ref{prop:rest_local_orum}, we equivalently just need to find the characteristic polynomial for $U_{m,d}$ and $\smd$. 

First, recall that for a matroid $M$, the characteristic polynomial is given by 
\[\chi_M(t)=\sum_{F\in L(M)} \mu_{L(M)}(\hat{\textbf{0}},F)t^{\rk M -\rk F},\]
where $L(M)$ is the lattice of flats for matroid $M$. The case when $M=U_{m,d}$, $\chi_M(t)$ is well understood. 
\[\chi_{U_{m,d}}(t)=(-1)^d{m+d-1\choose d-1}+\sum_{i=0}^{d-1} (-1)^i{m+d\choose i}t^{d -i}.\]

Parts of this also arise in $\chi_{\smd}$. 
\begin{prop}\label{prop:charpoly} Let $c=|\mcch|$.
\[\chi_{\smd}(t)=(-1)^d{m+d-1\choose d-1}-(-1)^dc+t(-1)^{d-1}\left({m+d\choose d-1}-c\right)+\sum_{i=0}^{d-2} (-1)^i{m+d\choose i}t^{d -i}.\]
\end{prop}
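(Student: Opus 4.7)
The plan is to apply the definition
\[\chi_{\smd}(t) \;=\; \sum_F \mu_{L(\smd)}(\hat{\textbf{0}}, F)\, t^{d-\rk F}\]
directly, grouping the flats into the four types listed in the previous proposition, computing $\mu(\hat{\textbf{0}}, F)$ for each type, and then assembling the coefficients.

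First, for a flat $F$ with $|F|=r\le d-2$, every subset of $F$ has cardinality at most $d-2$ and hence is itself a flat, so the interval $[\hat{\textbf{0}}, F]$ is isomorphic to the Boolean lattice of rank $r$ and $\mu(\hat{\textbf{0}}, F)=(-1)^r$. These produce precisely the sum $\sum_{r=0}^{d-2}(-1)^r\binom{m+d}{r}t^{d-r}$ appearing in the stated formula.

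Next I will handle the rank $d-1$ flats, in two subcases. If $|F|=d-1$ and $F$ is not contained in any $C\in\mcch$, then every proper subset of $F$ has cardinality at most $d-2$ and so is a flat; thus $[\hat{\textbf{0}}, F]$ is again Boolean, giving $\mu(\hat{\textbf{0}}, F)=(-1)^{d-1}$. Because the pairwise symmetric differences of elements of $\mcch$ are at least $4$, no $(d-1)$-set is contained in two distinct elements of $\mcch$, and hence exactly $cd$ of the $\binom{m+d}{d-1}$ cardinality-$(d-1)$ subsets fail to be flats. If instead $F\in\mcch$, then $|F|=d$ but $\rk F=d-1$, and the interval $[\hat{\textbf{0}}, F]$ consists of $F$ together with all subsets of $F$ of cardinality at most $d-2$ (every $(d-1)$-subset of $F$ lies inside the circuit-hyperplane $F$ and so is not a flat). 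The Möbius recursion then yields
\[\mu(\hat{\textbf{0}}, F) \;=\; -\sum_{r=0}^{d-2}(-1)^r\binom{d}{r} \;=\; (-1)^{d-1}(d-1),\]
using $\sum_{r=0}^d(-1)^r\binom{d}{r}=0$. Combining the two subcases gives the coefficient of $t$ as $(-1)^{d-1}\bigl(\binom{m+d}{d-1}-c\bigr)$, matching the middle term of the proposition.

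Finally, for $F=[m+d]$ I will apply the defining relation $\mu(\hat{\textbf{0}},[m+d])=-\sum_{F<[m+d]}\mu(\hat{\textbf{0}},F)$ (equivalently, the fact $\chi_{\smd}(1)=0$ for a loopless matroid of positive rank). Summing the contributions just computed and simplifying via the standard identity $\sum_{r=0}^{k}(-1)^r\binom{n}{r}=(-1)^k\binom{n-1}{k}$ with $n=m+d$ and $k=d-1$ produces $(-1)^d\binom{m+d-1}{d-1}-(-1)^d c$ for the constant term, exactly as stated. The main obstacle is the Möbius calculation at $F\in\mcch$, together with the count of $(d-1)$-subsets that fail to be flats; both steps rely crucially on the symmetric-difference $\ge 4$ hypothesis, and once these are in hand the remaining assembly is routine.
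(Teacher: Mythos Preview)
Your proposal is correct and follows essentially the same approach as the paper's own proof: both group flats by the four types, use that the interval below any flat of cardinality $\le d-2$ (and any $(d-1)$-set flat) is Boolean, compute $\mu(\hat{\textbf{0}},C)=(-1)^{d-1}(d-1)$ for $C\in\mcch$ via the M\"obius recursion over subsets of $C$ of size $\le d-2$, combine with the count $\binom{m+d}{d-1}-cd$ of $(d-1)$-set flats to obtain the linear coefficient, and then recover the constant term by negating the total. Your write-up is in fact slightly cleaner: you correctly sum $\binom{d}{r}$ in the M\"obius computation (the paper's displayed $\binom{m+d}{i}$ there is a typo, though its final value is right), and you invoke the alternating-sum identity $\sum_{r=0}^{k}(-1)^r\binom{n}{r}=(-1)^k\binom{n-1}{k}$ explicitly for the constant term.
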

\noindent It is noteworthy that this characteristic polynomial is the same for all choices of $\mcch$ that have the same size. This is due entirely to the symmetric difference condition on $\mcch$, as we will utilize in the proof.

\begin{proof}[Proof of Proposition \ref{prop:charpoly}.]
For convenience, we omit subscripts for $\chi$ and $\mu$, since throughout we work in $\smd$.
The terms of degree at least 2 follows from the uniform matroid case since in $\smd$, every set of size at most $d-2$ is still flat, since every set of size $d-1$ is independent. The term of degree one comes from summing $\mu(\zb, F)$ for flats $F$ of rank $d-1$. Recall that these flats are the elements of $\mcch$ and all elements of ${[m+d]\choose d-1}$ not contained in any member of $\mcch$. When $F$ is one of the latter described flats, it follows from the uniform case that $\mu(\zb, F)=(-1)^{d-1}$. Note that the number of such flats is ${m+d\choose d-1}-c{d\choose d-1}$, since the symmetric difference condition on $\mcch$ implies that $|C_i\cap C_j|\leq d-2$ for all $C_i,C_j\in \mcch$. That is to say that no set of size $d-1$ is contained in two elements of $\mcch$. Otherwise, if $C\in \mcch$,
\begin{align*}
\mu(\zb,C)&=-\sum_{\zb\leq F<C}\mu(\zb,F)\\
&=-\sum_{i=0}^{d-2}(-1)^i{m+d\choose i}\\
&=(-1)^d+d(-1)^{d-1}.
\end{align*}
Thus the coefficient linear term for $\chi$ is given by 
\begin{align*}
c(-1)^d+c d(-1)^{d-1}+(-1)^{d-1}\left({m+d\choose d-1}-c{d\choose d-1}\right)=(-1)^{d-1}{m+d\choose d-1}-c(-1)^{d-1}.
\end{align*}

For the constant term, it is equivalent to negate the sum over $\mu(\zb, F)$ for all flats $F\neq [m+d]$. This gives 
\begin{align*}
-\sum_{i=0}^{d-2} (-1)^i{m+d\choose i}-(-1)^{d-1}{m+d\choose d-1}-c(-1)^d&=-\sum_{i=0}^{d-1} (-1)^i{m+d\choose i}-c(-1)^d\\
&=(-1)^d{m+d-1\choose d-1}-c(-1)^d. \qedhere
\end{align*}
\end{proof}

It will be helpful to restate this proposition in the following way for when we prove Theorem \ref{thm:main}.
\begin{prop}\label{prop:charpoly_restated} (Proposition \ref{prop:charpoly} restated.)
\[ [t^i]\chi_{\smd} = \begin{cases} (-1)^d{m+d-1\choose d-1}-c(-1)^d & i=0\\
                                           (-1)^{d-1}{m+d\choose d-1}-c(-1)^{d-1} & i=1\\
                                           (-1)^{d-i}{m+d\choose d-i} & 2\leq i\leq d \end{cases}\]
\end{prop}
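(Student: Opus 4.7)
The plan is essentially trivial, since Proposition \ref{prop:charpoly_restated} is simply an algebraic reorganization of Proposition \ref{prop:charpoly}, which has already been proved. What I would do is extract the coefficient $[t^i]\chi_{\smd}$ from the closed form
\[\chi_{\smd}(t)=(-1)^d{m+d-1\choose d-1}-(-1)^dc+t(-1)^{d-1}\left({m+d\choose d-1}-c\right)+\sum_{j=0}^{d-2} (-1)^j{m+d\choose j}t^{d -j}\]
and check that it matches the three claimed cases.

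First, I would handle $i=0$: the constant term of $\chi_{\smd}(t)$ is picked up only from the first two summands (the $t$-term vanishes and the sum on the right gives $t^{d-j}$ with $d-j\ge 2$). This gives $(-1)^d\binom{m+d-1}{d-1}-c(-1)^d$, which matches the $i=0$ case.

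Next, for $i=1$, the only contribution is from the $t$-term explicitly displayed, which distributes as $(-1)^{d-1}\binom{m+d}{d-1}-c(-1)^{d-1}$, matching the $i=1$ case.

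Finally, for $2\le i\le d$, the contribution can only come from the sum. Reindexing by setting $j=d-i$, the condition $0\le j\le d-2$ translates to $2\le i\le d$, and the coefficient is $(-1)^{d-i}\binom{m+d}{d-i}$. The main (and only) obstacle is the careful bookkeeping of signs and indices, so I would lay out the three cases in order and verify each one line by line; nothing more involved is required.
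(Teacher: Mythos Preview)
Your proposal is correct and matches the paper's treatment exactly: the paper offers no separate proof, simply labeling the result as ``Proposition \ref{prop:charpoly} restated,'' and your coefficient-extraction argument is precisely the routine verification that justifies this label.
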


\section{The Kazhdan-Lusztig Polynomials for Sparse Paving Matroids}\label{sec:KL for sparse paving}
This section is dedicated to justifying the combinatorial formula given in Theorem \ref{thm:main}. We restate this part here for convenience, as its own Theorem. 

\begin{theorem}\label{thm:combformula}
Let $\coeff$ be the $i$-th coefficient for the Kazhdan-Lusztig polynomial for the sparse paving matroid $\smd$. Then
\[{\coeff}=\skyt(m+1,i,d-2i+1)-|\mcch|\cdot \bskyt(i,d-2i+1).\]
\end{theorem}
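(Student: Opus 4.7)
The plan is to proceed by strong induction on the rank $d$, using the defining recursion
\[ t^d P_M(t\inv) = \sum_{F \text{ a flat}} \chi_{M^F}(t)\, P_{M_F}(t) \]
with $M = \smd$. The $F = \emptyset$ term contributes $P_M(t)$, and the $F = [m+d]$ term contributes $\chi_M(t)$, so once we know the $P_{M_F}(t)$ by induction and the $\chi_{M^F}(t)$ by Proposition \ref{prop:charpoly}, the recursion determines the coefficients of $P_M(t)$ in degrees below $d/2$ by equating the coefficients of $t^{d-i}$ (for $i \le d/2 - 1$) on both sides. Using Proposition \ref{prop:rest_local_orum}, the intermediate flats split into four types: subsets of size $\le d-2$ not contained in any $C \in \mcch$; intermediate flats $\emptyset \subsetneq F \subsetneq C$ for some $C \in \mcch$; sets of size $d-1$ not contained in any element of $\mcch$; and the elements of $\mcch$ themselves.

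First I would handle the uniform case $\mcch = \emptyset$, reducing the recursion to an identity of the shape $\skyt(m+1, i, d-2i+1) = (\text{weighted sum of lower-rank } \skyt \text{ values})$, the weights being the binomials $\binom{m+d}{k}$ coming from the count of flats of each size and the factor $\chi_{U_{0,k}}(t)$. The explicit closed form in Lemma \ref{lem:manifposskyt} (or the alternating form of Lemma \ref{lem:skyt_count}) should enable a direct verification; equivalently, one can hope to recognize both sides as the result of applying the same integral-derivative template (as in the proofs of Lemmas \ref{lem:manifposskyt} and \ref{lem:manifposbarskyt}) to a suitable generating polynomial.

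Next I would pass to the general case by considering the difference $\Delta^i := \coefftemp{m}{d}{i} - \coefftemp{m}{d}{i}(\emptyset)$, which I claim equals $-|\mcch|\cdot\bskyt(i, d-2i+1)$. Subtracting the KL recursion for $U_{m,d}$ from that of $\smd$ isolates the contributions that distinguish the two matroids, which by Propositions \ref{prop:charpoly_restated} and \ref{prop:rest_local_orum} come from three sources: the corrections $-c(-1)^d$ and $-c(-1)^{d-1}$ in the constant and linear coefficients of $\chi_M(t)$; the flats $F \in \mcch$, where $M^F = U_{1,d-1}$ replaces $U_{0,d-1}$ and $M_F = U_{m-1,1}$ replaces the corresponding uniform contraction; and the intermediate flats $\emptyset \subsetneq F \subsetneq C$, where the inductive hypothesis produces an additional correction of $-|\mcch(F)|\cdot\bskyt(j, (d-|F|)-2j+1)$ in each coefficient of $P_{M_F}(t)$. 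Each intermediate $F$ of size $k$ is contained in exactly $|\mcch(F)|$ circuit-hyperplanes, so summing first over $C \in \mcch$ and then over $F \subsetneq C$ of size $k$ produces a factor $|\mcch|\binom{d}{k}$ by double-counting; crucially this linearizes the whole expression in $|\mcch|$, matching the form of the claimed answer.

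The main obstacle will be verifying the resulting combinatorial identity: that $\bskyt(i, d-2i+1)$ equals the precise weighted sum of lower-order $\bskyt$ and $\skyt$ values produced by the three sources above. The single-product closed form in Lemma \ref{lem:manifposbarskyt} is particularly encouraging, since it suggests that both sides can be realized as the output of the same integral template applied to a common polynomial, reducing the identity to an algebraic manipulation of rising and falling factorials aided by Corollary \ref{cor:int_0^1_y^a(1-xy)^b} and Proposition \ref{prop:int_i_times}. A secondary subtlety is the degree bound $2i < d$: when $2i \ge d$ the claimed formula must vanish, which should follow from the conventions $\skyt(a,i,b) = 0$ and $\bskyt(i,b) = 0$ when $b < 2$ and $i > 0$, but must be tracked carefully since the KL recursion relates the reflected polynomial $t^d P_M(t\inv)$ to $P_M(t)$ and the degree constraint $\deg P_M < d/2$ is what pins the coefficients down uniquely.
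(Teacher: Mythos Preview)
Your overall strategy matches the paper's: induct on $d$ via the defining KL recursion, feed in Propositions~\ref{prop:rest_local_orum} and~\ref{prop:charpoly_restated}, and reduce to an expression depending only on $|\mcch|$; the paper then cites \cite[Theorem~3]{rhoremoved} for the residual identity, whereas you propose to verify it directly from Lemmas~\ref{lem:manifposskyt} and~\ref{lem:manifposbarskyt}.

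The one substantive difference is in the linearization step. The paper stratifies the intermediate flats by the exact set $I\subseteq[c]$ of circuit-hyperplanes containing them, introduces the inclusion--exclusion counts $\Phi_j(I)=\sum_{I\subseteq J}(-1)^{|J|-|I|}\binom{c_J}{j}$, and then invokes the technical cancellation Lemma~\ref{lem:simplification} to kill all $|J|\ge 2$ terms. Your approach---first subtract the uniform recursion, then observe that the only surviving correction from size-$k$ intermediate flats is governed by $\sum_{|F|=k}|\mcch(F)|=|\mcch|\binom{d}{k}$ via a plain double-count---lands at the same endpoint without Lemma~\ref{lem:simplification} at all. This shortcut is legitimate precisely because the inductive hypothesis already makes the contracted KL coefficients \emph{linear} in $|\mcch(F)|$, so no finer stratification by $I$ is needed; it is a genuine simplification of the paper's argument. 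What the paper's organization buys in return is that it never needs the uniform case as a separate base computation: the $\skyt$ and $\bskyt$ parts are carried together throughout and dispatched jointly by the citation to \cite{rhoremoved}.
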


\begin{rem}\label{rem:conventions}
For some values of $m$, $d$, and $i$, we need to use our conventions set in place for $\skyt(a,i,b)$ and $\bskyt(a,i,b)$ in section \ref{sec:SKYT} for our formula to truly work.
\begin{itemize}
\item \cite[Proposition 2.11]{firstklpolys} shows that the degree 0 term always has coefficient 1. That is, when $i=0$, our formula must always return 1. 
\item When $d=0$ we are forced to have $P_{\smd}(t)=1$. 
\item When $0<d<3$, the degree requirement on Kazhdan-Lusztig polynomials forces $P_{\smd}(t)$ to have degree 0. Namely, in this case, we have $P_{\smd}(t)=1$, again by \cite[Proposition 2.11]{firstklpolys}. 
\item When $m=0$, note that $\mcch$ is forced to be empty and $\smd$ becomes $U_{0,d}$. It is shown in  \cite[Proposition 2.7]{firstklpolys} that $P_{M_1\oplus M_2}(t)=P_{M_1}(t)P_{M_2}(t)$ for matroids $M_1$ and $M_2$. With this, one can verify that $P_{U_{0,d}}(t)=1$ by seeing that $P_{U_{0,1}}(t)=1$ based on the $d<3$ discussion above. 
\end{itemize}
In all cases, our conventions guarantee we get the right values. Besides these cases, our conventions are not needed for our formula, and we are guaranteed that $\smd$ has more interesting structure than that of the boolean lattice.

\end{rem} 

The following technical result will be crucial in demonstrating why the formula given in Theorem \ref{thm:combformula} only depends on $|\mcch|$, and not the relationship between the elements of $\mcch$. 

\begin{lemma}\label{lem:simplification} Let $c,i\in \N\cup\{0\}$. For $I\subseteq [c]$, let $x_I$ be a variable. Let $g(k)$ and $h(k)$ are functions varying in $k$. Then 
\[-\sum_{\substack{J\subseteq [c] \\ |J|\geq 2 }} (-1)^{|J|} x_J\sum_{k=0}^i g(k)=\sum_{\substack{\emptyset\subsetneq I\subseteq [c] }}\sum_{\substack{I\subseteq J\subseteq [c] \\ |J|\geq 2 }} (-1)^{|J|-|I|} x_J\sum_{k=0}^i (\ g(k)-|I|h(k)\ ),\]

\end{lemma}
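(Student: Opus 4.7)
The plan is to attack this purely by interchanging orders of summation on the right-hand side and reducing the inner $I$-sums via two elementary alternating-sum identities. The key observation is that neither $g(k)$ nor $h(k)$ depends on $I$ or $J$, so the dependence on these index sets is essentially combinatorial and can be pulled out.

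First I would use linearity to split the right-hand side as
\[
\mathrm{RHS}=\underbrace{\sum_{\emptyset\subsetneq I\subseteq [c]}\sum_{\substack{I\subseteq J\subseteq [c]\\ |J|\ge 2}}(-1)^{|J|-|I|}x_J\sum_{k=0}^i g(k)}_{=:A}\;-\;\underbrace{\sum_{\emptyset\subsetneq I\subseteq [c]}\sum_{\substack{I\subseteq J\subseteq [c]\\ |J|\ge 2}}(-1)^{|J|-|I|}|I|\,x_J\sum_{k=0}^i h(k)}_{=:B}.
\]
Then in each of $A$ and $B$ swap the order of summation, fixing $J$ with $|J|\ge 2$ first and letting $I$ range over nonempty subsets of $J$. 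This converts the inner double-indexing into combinatorial sums over $I\subseteq J$ that depend only on $|J|$.

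For $A$, after pulling $x_J$ and $\sum_k g(k)$ outside, the inner sum is
\[
\sum_{\emptyset\subsetneq I\subseteq J}(-1)^{|J|-|I|}=(-1)^{|J|}\sum_{\emptyset\subsetneq I\subseteq J}(-1)^{|I|}=(-1)^{|J|}\cdot(-1)=-(-1)^{|J|},
\]
using the standard identity $\sum_{I\subseteq J}(-1)^{|I|}=0$ for $J\neq\emptyset$. Thus $A=-\sum_{|J|\ge 2}(-1)^{|J|}x_J\sum_{k=0}^i g(k)$, which is precisely the left-hand side.

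For $B$, the goal is to show $B=0$, so that $\mathrm{RHS}=A=\mathrm{LHS}$. After the swap, the inner sum for fixed $J$ (of size $n=|J|\ge 2$) is
\[
\sum_{\emptyset\subsetneq I\subseteq J}(-1)^{n-|I|}|I|=(-1)^n\sum_{k=1}^n(-1)^k k\binom{n}{k}.
\]
The sum $\sum_{k=0}^n(-1)^k k\binom{n}{k}$ equals $-n(1+x)^{n-1}\big|_{x=-1}=0$ for $n\ge 2$ (it is the derivative of $(1+x)^n$ evaluated at $x=-1$), so the inner sum vanishes for every $J$ with $|J|\ge 2$. Hence $B=0$ and the lemma follows.

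There is no real obstacle here — the proof is essentially bookkeeping plus the two identities $\sum_{\emptyset\subsetneq I\subseteq J}(-1)^{|I|}=-1$ and $\sum_{k=0}^n(-1)^k k\binom{n}{k}=0$ for $n\ge 2$. The only thing to be careful about is the restriction $|J|\ge 2$: the first identity only needs $J\neq\emptyset$, but the second genuinely requires $n\ge 2$, and this is exactly where the hypothesis $|J|\ge 2$ in the statement is used.
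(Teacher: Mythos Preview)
Your proof is correct and takes essentially the same approach as the paper: both fix $J$ with $|J|\ge 2$, sum over nonempty $I\subseteq J$, and apply the two identities $\sum_{\ell=0}^n(-1)^\ell\binom{n}{\ell}=0$ (for $n\ge 1$) and $\sum_{\ell=0}^n(-1)^\ell\ell\binom{n}{\ell}=0$ (for $n\ge 2$). The only cosmetic difference is that you split the right-hand side into the $g$-piece $A$ and the $h$-piece $B$ at the outset, whereas the paper keeps them together and separates the two sums at the end.
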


\begin{proof}
We show that the term with $x_J$ on both sides of the statement of the lemma is the same for every $J\subseteq [c]$, where $|J|\geq 2$. We start with the coefficient of $x_J$ on the right side. We note that the terms with $x_J$ appear for each $I$ that is contained in $J$, where $|I|\geq 1$. Hence, the term with $x_J$ on the right hand side of the statement of the Lemma is
\begin{align*}
&\sum_{\ell=1}^{|J|}  (-1)^{|J|-\ell}x_J{|J| \choose \ell} \sum_{k=0}^i (\ g(k)-\ell h(k)\ )\\
&=x_J(-1)^{|J|}\sum_{\ell=1}^{|J|} {|J| \choose \ell}(-1)^{\ell}\sum_{k=0}^i (\ g(k)-\ell h(k)\ )\\
&=x_J(-1)^{|J|}\left(\sum_{k=0}^i g(k)\sum_{\ell=1}^{|J|} (-1)^{\ell}{|J| \choose \ell}-\sum_{k=0}^i h(k) \sum_{\ell=1}^{|J|} (-1)^{\ell}\ell{|J|\choose \ell} \right)\\
&=x_J(-1)^{|J|}\left(-\sum_{k=0}^i g(k)\right),\\
\end{align*}
since we know in general we have the identities $\displaystyle \sum_{\ell=0}^n (-1)^\ell{n\choose \ell}=0$ for $n\geq 1$ and $\displaystyle\sum_{\ell=0}^n(-1)^\ell {n\choose \ell}\ell=0$ for $n\geq 2$. Note that the there is exactly one time where $x_J$ appears exactly once, and the corresponding term is $\displaystyle-x_J(-1)^{|J|}\sum_{k=0}^i g(k)$. 
\end{proof}

We now prove the desired formula for $\coeff$.
\begin{proof}[Proof of Theorem \ref{thm:combformula}.]
Let $M:=\smd$, and set $c:=|\mcch|$. Recall that the definition for the Kazhdan-Lusztig polynomial is that it satisfies the following recurrence,
\[\displaystyle t^{rk M}P_M(t\inv)=\sum_{F \text{ a flat}} \chi_{M^F}(t)P_{M_F}(t),\]
which may be rewritten as 
\[\displaystyle t^{rk M}P_M(t\inv)-P_M(t)=\sum_{F \text{ a non-empty flat}} \chi_{M^F}(t)P_{M_F}(t).\]

Recall that $\deg P(t)<{1\over 2} d$, and so the power of each monomial in $t^{d}P_M(t\inv)$ is strictly larger than ${1\over 2}d$. Hence, our goal is to show that for $0\leq i< {1\over 2} d$ we have 
\begin{align}\label{eq:step1}
-\skyt(m+1,i,d-2i+1)+c\cdot \bskyt(i,d-2i+1)=[t^i]\sum_{F \text{ a non-empty flat}} \chi_{M^F}(t)P_{M_F}(t).
\end{align}

Using our work from Proposition \ref{prop:rest_local_orum}, and consolidating common factors involving the various flats in $\mcch$, we can rewrite the right of equation \eqref{eq:step1} to be 
\begin{align}\label{eq:step2}
[t^i]\chi_{\smd}+c[t^i]\chi_{U_{1,d-1}}P_{U_{m-1,1}}+\sum_{\substack{\emptyset\subsetneq F\subsetneq C\\\text{For some $C\in \mcch$}}}[t^i]\chi_{U_{0,|F|}}P_{\smdtemp{m}{d-|F|}{\mcch(F)}}+\sum_{\substack{\emptyset\subsetneq F\subsetneq [m+d]\\ F\nsubseteq C\ \forall C\in \mcch}}[t^i]\chi_{U_{0,|F|}}P_{{U_{m,d-|F|}}},
\end{align}
where the first term corresponds to the case where $F=[m+d]$, and the second where $F\in \mcch$.

By Proposition \ref{prop:charpoly_restated}, we are required to break this up into three case: $i=0$, $i=1$, and $2\leq i<d/2$ if we are to write this out explicitly. Note we can write everything explicitly except $P_{{\smdtemp{m}{d-|F|}{\mcch(F)}}}$. Hence, we proceed by induction on the matroid rank $d$, noting that $d>d-|F|$ since for the corresponding summand $F$ is never empty.

We now define some notation in order to rewrite the summations appearing in \eqref{eq:step2}. Let $I\subseteq [c]$ and $C_i\in \mcch$. We define $\displaystyle C_I:=\bigcap_{i\in I}C_i$ and denote $c_I:=|C_I|$. By convention, $C_\emptyset=[m+d]$. Recall that $\mcch(F):=\{C\setminus F: C\in \mcch \text{ such that } F\subseteq C\}$. Let $j$ be an integer and define the following sum indexed by $J$:
\[\Phi_j(I):=\sum_{I\subseteq J \subseteq [c]}(-1)^{|J|-|I|}{c_J\choose j}.\]
If $j$ is selected appropriately, $\Phi_j(I)$ counts the number of flats of rank $j$ contained in $C_I$, but not in any $C_J$ so that $C_J\subseteq C_I$. Hence, $F$ is a flat counted by $\Phi_j(I)$ if and only if $\mcch(F)=\{C_i\setminus F: i\in I\}$. What we will leverage from this is that $|\mcch(F)|=|I|$.


We can now rewrite equation \eqref{eq:step2}. We use the Kronecker delta function $\delta(i,j)=\begin{cases} 1& i=j\\ 0& i\neq j\\\end{cases}$ to combine the cases for $i=1$ and $2\leq i<d/2$. 
\begin{enumerate}
\item[$i=0$:] 
\begin{align*}
&(-1)^d{m+d-1\choose d-1}-c(-1)^{d}+c(-1)^{d-1}{d-1\choose d-2}\\
&+\sum_{j=1}^{d-2}\sum_{\emptyset\subsetneq I \subseteq [c]}\Phi_j(I)(-1)^j(\skyt(m+1,0,d-j+1)-|I|\cdot\bskyt(0,d-j+1))\\
&+\sum_{j=1}^{d-1}\Phi_j(\emptyset)(-1)^j\skyt(m+1,0,d-j+1)
\end{align*}

\item[$i>0$:]\begin{align*}
&(-1)^{d-i}{m+d-1\choose d-i}-c(-1)^{d-1}\delta(i,1)+c(-1)^{d-1-i}{d\choose d-1-i}\\
&+\sum_{j=1}^{d-2}\sum_{\emptyset\subsetneq I \subseteq [c]}\Phi_j(I)\sum_{k=0}^i(-1)^{j-i+k}{j\choose j-i+k}(\skyt(m+1,k,d-j-2k+1)-|I|\bskyt(k,d-j-2k+1))\\
&+\sum_{j=1}^{d-1}\Phi_j(\emptyset)\sum_{k=0}^i(-1)^{j-i+k}{j\choose j-i+k}\skyt(m+1,k,d-j-2k+1)
\end{align*}
\end{enumerate}

In both cases, the sum running from $j=1$ to $j=d-2$ is the summand in equation \eqref{eq:step2} over $\emptyset\subsetneq F\subsetneq C$ for $C\in \mcch$, since the flats contained in $C$ have size at most $d-2$.  The other sum running from $j=1$ to $j=d-1$ corresponds to the summand in equation \eqref{eq:step2} over $\emptyset\subsetneq F\subsetneq [m+d]$ such that $F\nsubseteq C$ for all $C\in \mcch$. 

To simplify things further, first, note that \[\displaystyle\Phi_{d-1}(\emptyset)={m+d\choose d-1}-c{d\choose d-1}.\] By construction, $\Phi_{d-1}(\emptyset)$ counts the rank $d-1$ flats contained in no element of $\mcch$. Recall that the only rank $d-1$ flats are those not contained in any circuit-hyperplane.

Next, note that many terms from the two sums running over $j$ in both the $i=0$ and $i>0$ case will cancel as a result of Lemma \ref{lem:simplification}. Fix $j\leq d-2$ and suppose $J\subseteq [c]$. Set

$\bullet$ $\displaystyle x_J:={c_J\choose j}$,

$\bullet$ $\displaystyle g(k):=(-1)^{j-i+k}{j \choose j-i+k}\skyt(m+1,k,d-j-2k+1)$, and

$\bullet$ $\displaystyle h(k):=(-1)^{j-i+k}{j\choose j-i+k}\bskyt(k,d-j-2k+1)$.

\noindent This allows us to rewrite our two cases in the following way. 
\begin{enumerate}
\item[$i=0$:] 
\begin{align*}
&(-1)^d{m+d-1\choose d-1}-c(-1)^{d}+c(-1)^{d-1}{d-1\choose d-2}\\
&+\sum_{j=1}^{d-2}\sum_{\emptyset\subsetneq I \subseteq [c]}
\sum_{I\subseteq J \subseteq [c]}(-1)^{|J|-|I|}x_J(\ g(0)-|I|h(0)\ )\\
&+\sum_{j=1}^{d-1}\sum_{\emptyset\subseteq J \subseteq [c]}(-1)^{|J|}x_J g(0)
\end{align*}

\item[$i>0$:]\begin{align*}
&(-1)^{d-i}{m+d\choose d-i}-c(-1)^{d-1}\delta(i,1)+c(-1)^{d-1-i}{d\choose d-1-i}\\
&+\sum_{j=1}^{d-2}\sum_{\emptyset\subsetneq I \subseteq [c]}\sum_{I\subseteq J \subseteq [c]}(-1)^{|J|-|I|}x_J\sum_{k=0}^ig(k)-|I|h(k)\\
&+\sum_{j=1}^{d-1}\sum_{\emptyset\subseteq J \subseteq [c]}(-1)^{|J|}x_J\sum_{k=0}^ig(k)
\end{align*}
\end{enumerate}

The following argument works for both the $i=0$ and $i>0$ case, so we speak of both simultaneously as if they were one. Let $A$ correspond to the sum indexed by $j$ where $j$ is at most $d-2$. Likewise define $B$ to be the sum indexed by $j$ where $j$ is at most $d-1$. By Lemma \ref{lem:simplification}, the terms where $|J|\geq 2$ in $A$ will cancel all terms where $|J|\geq 2$ in $B$. What remains in $A$ are the terms where $|J|=1$, that is, the terms where $J=I$ and $|I|=1$. There are $c$ such terms, each contributing ${d\choose j}$, as the members of $\mcch$ have cardinality $d$. For $B$, when $j\leq d-2$, the only terms that remain are those where $|J|$ equals 0 or 1. This gives $c+1$ terms: one contributing ${m+d\choose j}$, and $c$ terms contributing $-{d\choose j}$. Combining this with our identity for $\Phi_{d-1}(\emptyset)$ given above, we get the following simplification.

\begin{enumerate}
\item[$i=0$:] 
\begin{align*}
&(-1)^d{m+d-1\choose d-1}-c(-1)^{d}+c(-1)^{d-1}{d-1\choose d-2}\\
&+c\sum_{j=1}^{d-2}{d\choose j}(-1)^j(\skyt(m+1,0,d-j+1)-\bskyt(0,d-j+1))\\
&+\sum_{j=1}^{d-1}\left({m+d\choose j}-c{d\choose j}\right)(-1)^j\skyt(m+1,0,d-j+1)
\end{align*}

\item[$i>0$:]\begin{align*}
&(-1)^{d-i}{m+d\choose d-i}-c(-1)^{d-1}\delta(i,1)+c(-1)^{d-1-i}{d\choose d-1-i}\\
&+c\sum_{j=1}^{d-2}{d\choose j}\sum_{k=0}^i(-1)^{j-i+k}{j\choose j-i+k}(\skyt(m+1,k,d-j-2k+1)-\bskyt(k,d-j-2k+1))\\
&+\sum_{j=1}^{d-1}\left({m+d\choose j}-c{d\choose j}\right)\sum_{k=0}^i(-1)^{j-i+k}{j\choose j-i+k}\skyt(m+1,k,d-j-2k+1).
\end{align*}
\end{enumerate}

We now point out that remarkably, this formula no longer depends on the structure of $\mcch$, only the size. Hence, the proof proceeds as in the case of Theorem 3 in \cite{rhoremoved}.
\end{proof}

\section{Bounds on $|\mcch|$} \label{sec:bounds}
Our proof for the non-negativity of Theorem \ref{thm:main} will be purely computational. Hence, since $|\mcch|$ is a part of our formula, having bounds on this value will be useful. We will give two particularly important bounds. 

The first bound is given as follows.
\begin{theorem}\label{thm:codingbound}
\[|\mcch|\leq {1\over m+1}{m+d\choose d}.\]
\end{theorem}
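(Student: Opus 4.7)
The plan is to prove this bound by a shadow (or packing) argument on $(d-1)$-subsets of $[m+d]$, which is essentially the Singleton-type bound for constant-weight codes in the Johnson scheme. The sparse-paving condition translates directly into a pairwise-intersection constraint: for any two distinct $C, C' \in \mcch$, the inequality $|C \sd C'| \geq 4$ is equivalent (since $|C \sd C'| = 2d - 2|C \cap C'|$) to $|C \cap C'| \leq d-2$.

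The key combinatorial observation is that this forces the $(d-1)$-element subsets lying inside distinct elements of $\mcch$ to be disjoint as sets. Indeed, if some $(d-1)$-subset $D$ were contained in both $C$ and $C'$, then $|C \cap C'| \geq |D| = d-1$, contradicting $|C \cap C'| \leq d-2$. Hence the collection
\[
\bigsqcup_{C \in \mcch} \binom{C}{d-1}
\]
is actually a \emph{disjoint} union inside $\binom{[m+d]}{d-1}$. Since each $C \in \mcch$ contains exactly $\binom{d}{d-1} = d$ such subsets, this gives
\[
d \cdot |\mcch| \;\leq\; \binom{m+d}{d-1}.
\]

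The final step is purely algebraic: rewrite $\tfrac{1}{d}\binom{m+d}{d-1}$ to match $\tfrac{1}{m+1}\binom{m+d}{d}$. Using $\binom{m+d}{d-1} = \tfrac{d}{m+1}\binom{m+d}{d}$, we obtain
\[
|\mcch| \;\leq\; \frac{1}{d}\binom{m+d}{d-1} \;=\; \frac{1}{m+1}\binom{m+d}{d},
\]
as desired. There is no serious obstacle here; the only thing to check carefully is the equivalence between the symmetric-difference condition and the intersection condition, and the fact that a shared $(d-1)$-subset forces the intersection of two $d$-sets to be too large. One could equivalently prove the bound by counting $(d+1)$-supersets, but that approach yields $|\mcch| \leq \tfrac{1}{m}\binom{m+d}{d+1}$, which in general differs from the stated bound by a factor of $(m+1)/(d+1)$; the $(d-1)$-shadow argument is the one that gives the sharp constant $\tfrac{1}{m+1}$.
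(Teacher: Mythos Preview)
Your argument is correct. The translation $|C\sd C'|\ge 4 \iff |C\cap C'|\le d-2$ is right, the disjointness of the $(d-1)$-shadows follows immediately, and the binomial identity $\binom{m+d}{d-1}=\tfrac{d}{m+1}\binom{m+d}{d}$ gives the stated bound.

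As for comparison: the paper does not actually supply its own proof of this theorem. It simply cites two sources---a matroid-theoretic argument in \cite[Lemma~2.7]{sparsebasispaper} and the constant-weight coding bound in \cite[Theorem~12]{codingbounds}---and remarks that the two proofs differ. Your shadow/packing argument is essentially the standard Johnson/Singleton-type bound and is the natural self-contained proof. It is worth noting that the key step you use (no $(d-1)$-set lies in two distinct elements of $\mcch$) is exactly the observation the paper already invokes in the proof of Proposition~\ref{prop:charpoly} when counting rank-$(d-1)$ flats, so your proof fits seamlessly with the rest of the paper. Your closing remark about the $(d+1)$-superset count is also accurate: that dual argument yields $\tfrac{1}{d+1}\binom{m+d}{d}$, which is the bound of Theorem~\ref{thm:jamiesbound}'s companion only when $m\ge d$, so the shadow direction is indeed the one that matches the constant here.
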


This can be recovered in multiple settings. One can find an outline of a matroid theory argument in \cite[Lemma 2.7]{sparsebasispaper}. However, this bound also happens to be a standard coding theory result. Recall that for $\smd$, the circuit-hyperplanes $\mcch$ is a subset of elements in ${[m+d]\choose d}$ so that any pair has symmetric difference at least 4. One could equivalently describe such a set as a binary constant-weight code with hamming distance 4. In this context, the bound in Theorem \ref{thm:codingbound} gives a bound on the size of a code with these conditions, as shown in \cite[Theorem 12]{codingbounds}. In fact, \cite{codingbounds} proves a more arbitrary bound accounting for any lower bound on symmetric difference, not just 4. It is also worth noting that the proofs for this bound given in both \cite{codingbounds} and \cite{sparsebasispaper} are in fact different, even when both are in the language of matroid theory.

While this bound will serve useful, there will be times where it will not be sufficient for our purposes. Unlike the prior bound, we found no literature to support the bound that follows.

\begin{theorem}\label{thm:jamiesbound}
\[|\mcch|\leq {2\over m+d+2}{m+d\choose d}.\]
\end{theorem}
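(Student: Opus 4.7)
The plan is to derive this bound as an immediate corollary of Theorem~\ref{thm:codingbound} combined with matroid duality, without any new combinatorial work. The key observation is that sparse paving matroids are dual-closed (as stated in the introduction): the dual of $\smd$ is the sparse paving matroid $\smd^{*} = \smdtemp{d}{m}{\mcch^{*}}$, where
\[\mcch^{*} := \{[m+d]\setminus C : C \in \mcch\}\]
is its set of circuit-hyperplanes (of rank $m$), and $|\mcch^{*}| = |\mcch|$. One checks directly that $\mcch^{*}$ satisfies the symmetric-difference condition because $|(S\setminus A)\triangle (S\setminus B)| = |A\triangle B|$ for any ambient set $S$.

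First, I would apply Theorem~\ref{thm:codingbound} directly to $\smd$, giving
\[|\mcch| \leq \frac{1}{m+1}\binom{m+d}{d}.\]
Next, applying the same theorem to the dual matroid $\smd^{*}$, where the roles of rank and corank are interchanged, yields
\[|\mcch| = |\mcch^{*}| \leq \frac{1}{d+1}\binom{m+d}{m} = \frac{1}{d+1}\binom{m+d}{d}.\]
Combining these two bounds gives
\[|\mcch| \leq \frac{1}{\max(m+1,\,d+1)}\binom{m+d}{d}.\]

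The proof would conclude with the elementary inequality $\max(m+1,\,d+1) \geq \tfrac{1}{2}\bigl((m+1)+(d+1)\bigr) = \tfrac{m+d+2}{2}$ (the maximum of two numbers is at least their average), which gives $\tfrac{1}{\max(m+1,\,d+1)} \leq \tfrac{2}{m+d+2}$ and hence the desired bound.

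There is no substantive obstacle here; the entire argument is a two-line consequence of Theorem~\ref{thm:codingbound}, matroid duality, and a trivial AM-max inequality. The relative novelty of the bound in the literature (noted by the authors) appears to stem from its symmetric packaging of the two dual one-sided bounds into a single expression involving $m+d+2$, rather than from any combinatorial depth.
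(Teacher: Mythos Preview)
Your proof is correct and takes a genuinely different route from the paper. The paper proves Theorem~\ref{thm:jamiesbound} directly by a discharging argument on the Johnson graph $J(m+d,d)$: each vertex in the independent set $\mcch$ places weight $1$ on each incident edge and weight $1/2$ on every edge inside its neighborhood; a case analysis of the common neighbors of an edge shows that no edge receives total weight exceeding $1$, and summing the weights gives the bound. Your argument bypasses all of this by applying Theorem~\ref{thm:codingbound} to both $\smd$ and its dual $S_{d,m}(\mcch^{*})$ and then using $\max(m+1,d+1)\geq (m+d+2)/2$.

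What your approach buys: it is far shorter, needs no new combinatorics, and in fact yields the sharper intermediate bound
\[
|\mcch|\leq \frac{1}{\max(m+1,d+1)}\binom{m+d}{d},
\]
which equals the paper's bound only when $m=d$. In particular, the remark following Theorem~\ref{thm:jamiesbound} that ``both bounds are necessary'' is slightly overstated: Theorem~\ref{thm:codingbound} combined with duality already dominates Theorem~\ref{thm:jamiesbound} everywhere. What the paper's approach buys: the discharging argument is self-contained (it does not invoke duality or Theorem~\ref{thm:codingbound}) and illustrates a technique that could apply in settings lacking a duality symmetry.
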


\begin{rem}
These two bounds have an interesting relationship. First, observe that \[ {1\over m+1}{m+d\choose d}> {2\over m+d+2}{m+d\choose d} \text{ if and only if } d>m.\]
A take-away here is that both bounds are necessary to get a good bound for $|\mcch|$. Excitingly, when $m=d$, not only do these bounds agree, but they equal the $m$th Catalan number $C_m$, where
\[C_m={1\over m+1}{2m\choose m}.\]

\end{rem}

To prove Theorem \ref{thm:jamiesbound}, we will utilize a graph theory technique known as \textit{discharging}. First, though, it is necessary to make clear the connection between sparse paving matroids and graphs. Let $J(n,d)$ be a graph with vertex set ${[n]\choose d}$, where vertices are adjacent if and  only if their symmetric difference is size 2. This graph is best known as the \textit{Johnson Graph}. The symmetric difference condition on $\mcch$ implies that $\mcch$ is an independent set in $J(m+d,d)$, that is, a set of vertices with no edges between them. So finding an upper bound on $|\mcch|$ is equivalent to a bound on the size of an independent set in $J(m+d,d)$.

There are some final graph theory notation conventions we give before providing the proof of Theorem \ref{thm:jamiesbound}. Let $A$ and $B$ be vertices in $J(n,d)$. To indicate $A$ and $B$ are {adjacent} we write $A\adj B$. When an edge has vertex $A$ as an endpoint, we say that edge is \textit{incident} to $A$. By $N(A)$ we mean the induced graph on the vertices adjacent to $A$ in $J(n,d)$. That is, $N(A)$ is the subgraph of $\jnd$ where for all vertices $B,C\in N(A)$, we have $B\adj C$ in $N(A)$ if and only if $B \adj C$ in $J(n,d)$.

\textit{Proof of Theorem \ref{thm:jamiesbound}.}
{ 
  Let $I \of \binom{[n]}d$ be an independent set of vertices in $\jnd$. We will describe an assignment of weights to edges of $\jnd$ based on $I$. Start with a weight of 0 on all edges of $\jnd$. If $A\in I$ we add a weight of $1$ to each edge incident with $A$. Furthermore, $A$ adds a weight of $1/2$ to all edges in $N(A)$.   
  Note that there are $d(n-d)$ vertices of $N(A)$ since every neighbor $B$ of $A$ is specified uniquely by $B = (A \wo \set{a_B}) \cup \set{x_B}$ where $a_B \in A$ and $x_B\in A\compl$. Two vertices $B,C\in N(A)$ are adjacent iff $a_B=a_C$ or $x_B=x_C$. This implies that the graph induced on $N(A)$ is regular of degree $d-1+(n-d-1)=n-2$. Thus $A$ assigns a total weight of 
    \[
        w = d(n-d) + \frac12 \cdot \frac{ d(n-d)(n-2)}2 = d(n-d)\parens[\Big]{1 + \frac{n-2}4}
    \]
    to edges of the graph.
    
    We will now show that no edge of $\jnd$ receives a total weight of more than $1$ from this assignment. First, note that no edge is incident with two elements of $I$, for they would be adjacent. Similarly, if an edge is incident with $A\in I$ it cannot also be an edge in $N(A')$ for any $A'\in I$ for then we would have $A \adj A'$, a contradiction. Thus it only remains to prove that if $AB$ is an edge then there exist at most two elements $A'$ of $I$ that have $A,B\in N(A')$. 
    
    Let us consider what common neighbors of $A$ and $B$ look like. We know that $C=A\cap B$ has size $d-1$ and for some $x,y\in [n]$ we have $A = C\cup\set{x}$ and $B = C\cup \set{y}$. Consider now $A' \in N(A)\cap N(B)$. If $C\of A'$ then $A' = C\cup {z}$ for some $z\neq x,y$ in $C\compl$. We call such common neighbors \emph{type $1$}. Now if a neighbor $A'$ of $A$ is not of type $1$ then it has the form $(C\wo \set{c}) \cup \set{x,z}$ for some $c\in C$ and $z\not\in A$. But the only way such a set can also be a neighbor of $B$ is to have $z=y$. Thus all other common neighbors of $A$ and $B$ are \emph{type $2$} common neighbors: those of the form $(C\wo \set{c}) \cup\set{x,y}$. 
    
    Now we simply note that the type $1$ common neighbors of $A$ and $B$  are all pairwise adjacent to one-another in $\jnd$, as are the type $2$ common neighbors. That means at most one type 1 neighbor and at most one type 2 neighbor may be in $I$. Thus the edge $AB$ receives a weight of $1/2$ from at most one type $1$ common neighbor, and weight $1/2$ from at most one type $2$ common neighbor, for a total weight of at most $1$.
    
    Now we simply compute as follows. Each member of the independent set $I$ assigns total weight $w$ to the edges of $\jnd$, and each edge of $\jnd$ receives total weight at most $1$ from the elements of $I$, so
    \begin{align*}
        \abs{I}\, w = \abs{I}\, d(n-d)\parens[\Big]{1 + \frac{n-2}4} &\le \binom{n}d \frac{d(n-d)}2 = e(\jnd), \\
    \shortintertext{thus}
        \abs{I} \,\parens[\Big]{1 + \frac{n-2}4} &\le \binom{n}d \frac12 \\
        \abs{I} \,(n+2) &\le 2 \binom{n}d \\
        \abs{I} &\le \frac{2}{n+2} \binom{n}d. 
    \end{align*}
    \hfill $\square$
    }
    
    \section{Non-Negativity for Sparse Paving Matroids}\label{sec:positivity}
    With the formula for Theorem \ref{thm:main} proven, we now move to showing that this formula is always non-negative. When $\mcch$ is a disjoint family, this formula has a manifestly positive interpretation, as stated in the introduction of this paper. More details can be found in \cite{rhoremoved}. Otherwise, for more general cases of sparse paving matroids, we are not yet able to give a manifestly non-negative expression. Instead, we show directly that our formula from Theorem \ref{thm:main} is non-negative by relying on the bounds given in section \ref{sec:bounds} for $|\mcch|$,  our formulas for $\skyt(a,i,b)$ and $\bskyt(i,b)$ given in section \ref{sec:SKYT}, and some standard algebra and calculus tools. The details for this proof will be rather technical, and our proof will need a few cases, so the proof serves more as an outline, leaving most of the work to seperate Lemmas and Propositions. Throughout the proofs of this section, we use the falling factorial $(x)_{(n)}:=x(x-1)\cdots(x-n+1)$. We will also regularly use the fact $\deg P_M(t)<{1\over 2}\rk M$. That is, if $d$ is the rank of a matroid $M$, and $i$ is the power of some term in the Kazhdan-Lusztig polynomial $P_M(t)$, then we must have $i<d/2$. 

\begin{theorem} Let $\coeff$ be the Kazhdan-Lusztig coefficient for a sparse paving matroid $\smd$. Then 
\[\coeff\geq 0.\]
\end{theorem}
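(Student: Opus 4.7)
The plan is to verify the inequality $c^i_{m,d}(\mcch) \ge 0$ by combining the closed forms of Lemmas \ref{lem:manifposskyt} and \ref{lem:manifposbarskyt} with the bounds on $|\mcch|$ established in Section \ref{sec:bounds}. The cases $i = 0$ and $m = 0$ are immediate from the conventions of Section \ref{sec:SKYT} (since $\bskyt(0,b) = 0$ in the former and $|\mcch| = 0$ in the latter), so the focus is on $i \ge 1$ and $m \ge 1$. The degree constraint $i < d/2$ guarantees $b := d - 2i + 1 \ge 2$, so both closed forms apply. Since $\bskyt(i,d-2i+1) \ge 0$, it suffices to prove
\[\skyt(m+1,i,d-2i+1) \;\ge\; U \cdot \bskyt(i,d-2i+1)\]
for any valid upper bound $U$ on $|\mcch|$.

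First I would substitute $a = m+1$ and $b = d - 2i + 1$ into both Lemmas. Observe that $a + b + 2i - 2 = m + d$, so $\binom{a+b+2i-2}{b+i-1}$ becomes $\binom{m+d}{d-i}$, which relates cleanly to the $\binom{m+d}{d}$ factor in each bound of Section \ref{sec:bounds} via $\binom{m+d}{d-i} = \binom{m+d}{d}\cdot \frac{(d)_{(i)}}{(m+1)^{(i)}}$. Using the common-denominator form of Remark \ref{rem:rewriting}, I would rewrite $\skyt(m+1,i,d-2i+1)$ as a single fraction whose numerator is a positive sum over $k \in \{0,1,\dots,d-2i-1\}$. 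After clearing factorial denominators and this shared binomial, the target inequality becomes a polynomial inequality in $m$, $d$, and $i$.

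Next, I would split into two cases based on which bound of Section \ref{sec:bounds} is smaller: Theorem \ref{thm:codingbound} is sharper when $d \le m$, and Theorem \ref{thm:jamiesbound} is sharper when $d > m$. In both cases the bound has the form $U = \lambda \binom{m+d}{d}$ for a simple rational prefactor $\lambda$, so after cancellation the factor $\binom{m+d}{d}$ disappears entirely and the comparison reduces to matching factorials and rising/falling products. For the single-term closed form of $\bskyt(i,d-2i+1) = \frac{2\,d!}{(i+1)!(i-1)!(d-2i-1)!(d-i+1)(d-i-1)}$, these cancellations are particularly clean.

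The main obstacle is that $\skyt$ is a sum with a variable number of positive terms while $\bskyt$ is a single closed-form quantity, so a crude termwise bound can lose too much factorial growth. I expect the cleanest route is to isolate a single, well-chosen term of the $\skyt$ sum---most likely the $k = 0$ term or the extremal $k = d - 2i - 1$ term, whose factorial structure most closely mirrors $\bskyt$---and show that this one term alone already dominates $U \cdot \bskyt(i,d-2i+1)$, treating the remaining terms as positive slack. The resulting one-term inequality should reduce, in each of the two cases $d \le m$ and $d > m$, to an elementary polynomial inequality that can be verified by monotonicity in $m$ (holding $d$ and $i$ fixed) together with a finite check of small $m$, completing the proof.
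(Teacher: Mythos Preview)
Your outline is essentially the paper's own strategy: take the $k=0$ term from the common-denominator form of Remark~\ref{rem:rewriting} as a lower bound for $\skyt$, pair it with the single-term closed form of Lemma~\ref{lem:manifposbarskyt}, and reduce $\skyt(m+1,i,d-2i+1)\ge U\cdot\bskyt(i,d-2i+1)$ to an explicit polynomial inequality in $m,i,d$ checked by elementary calculus. This is exactly what Lemmas~\ref{lem_pos_i_m_arb}, \ref{lem_m_2}, and \ref{lem_pos_i2} do.

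The gap is in your last paragraph, where you assume the two bounds of Section~\ref{sec:bounds} are always enough and that what remains is a ``finite check of small $m$.'' They are not. At $(m,d,i)=(3,3,1)$ one has $\skyt(4,1,2)=9$ and $\bskyt(1,2)=2$, while both Theorem~\ref{thm:codingbound} and Theorem~\ref{thm:jamiesbound} give $|\mcch|\le 5$; plugging in yields $9-5\cdot 2=-1<0$, so the inequality you plan to prove is \emph{false} at that point and no amount of algebra will rescue it. The paper closes this case only by a separate constructive argument that in fact $|\mcch|\le 4$ when $m=d=3$. Similarly, for $m=2$ neither Section~\ref{sec:bounds} bound is sharp enough for the single-term comparison to go through; the paper instead derives and uses the ad~hoc bound $|\mcch|\le (d+2)/2$ (the largest matching in $\binom{[d+2]}{2}$) in Lemma~\ref{lem_m_2}, and handles $m=1$ via the trivial $|\mcch|\le 1$. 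Finally, the clean two-case split ``$d\le m$ versus $d>m$'' does not survive uniformly across $i$: the paper treats $i=1$ (with a closed form for $\skyt$, Proposition~\ref{prop_closedform_ieq1}, and only Theorem~\ref{thm:codingbound}), $i=2$ (Lemma~\ref{lem_pos_i2}, using both bounds), and $i\ge 3$ (Lemma~\ref{lem_pos_i_m_arb}, using only Theorem~\ref{thm:jamiesbound}) as three separate regimes. Your plan is sound in spirit, but you should expect the boundary analysis to require new bounds on $|\mcch|$ beyond Section~\ref{sec:bounds}, not merely a finite computation.
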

\begin{proof}
We are able to take care of most of the cases simultaneously. Since \[\coeff=\skyt(m+1,i,d-2i+1)-|\mcch|\cdot \bskyt(i,d-2i+1)\] by Theorem \ref{thm:combformula} and \(|\mcch|\leq {2\over m+d+2}{m+d\choose d}\) by Theorem \ref{thm:jamiesbound}, we have
\[\coeff\geq \skyt(m+1,i,d-2i+1)-{2\over m+d+2}{m+d\choose d}\cdot \bskyt(i,d-2i+1).\] 
Then by Lemma \ref{lem_pos_i_m_arb}, this expression is non-negative for $i\geq 3$, $m\geq 3$, and for all possible $d$. That is, for $d>2i$

This leaves a small number of more specific cases left, which need to be addressed independently. We first note that the cases for $m=0$ and $i=0$ are taken care of by Remark \ref{rem:conventions}.

When $m=1$, notice that any pair of basis elements have symmetric difference 2, and so $|\mcch|\leq 1$. In this case our desired result is immediate since by definition, we may view $\bSkyt(i,d-2i+1)$ as a subset of $\Skyt(2,i,d-2i+1)$.

When $m=2$, it is necessary to find a better bound on the size of $\mcch$. It is not too much work to show that $|\mcch|\leq {d+2\over 2}$ by using the symmetric difference condition on $\mcch$. It is easier to work with the complements of the elements in $\mcch$, which are elements of ${[d+2]\choose 2}$. Then it is equivalent in this case to count the size of the largest disjoint family in ${[d+2]\choose 2}$. So in the case of $m=2$ we have 

\[ \coeff\geq \skyt(m+1,i,d-2i+1)-{d+2\over 2}\cdot \bskyt(i,d-2i+1),\]
and so to prove our desired result in this case we need only prove 
\[\skyt(m+1,i,d-2i+1)-{d+2\over 2}\cdot \bskyt(i,d-2i+1)\geq 0.\]
We do this for $i\geq 1$, leaving the details to Lemma \ref{lem_m_2}.

Now we move on to the remaining values of $i$, noting we need only show them for $m\geq 3$. When $i=1$, one can get the following closed formula for $\skyt(m+1,i,d-2i+1)$. We get
\[\skyt(m+1,1,d-1)={m+d\choose d-1}-m-d\]
by Proposition \ref{prop_closedform_ieq1}. Also, note that $\bskyt(1,d-1)=d-1$, which can be seen by using Lemma \ref{lem:manifposbarskyt}, or by simply observing that only numbers in $\{2,3,4,\dots,d\}$ may appear below the position containing 1 in $\bskyt(1,d-1)$. It is also important to note that when $i=1$, $d\geq 3$. Then to get our desired result in this case, we can combine Theorem \ref{thm:combformula} and Theorem \ref{thm:codingbound} and instead show that
\[{m+d\choose d-1}-m-d-{1\over m+1}{m+d\choose d}(d-1)\geq 0.\]
Lemma \ref{lem_pos_i1} is able to show this for $d\geq 3$ when $m\geq 4$, but only for $d\geq 4$ when $m=3$. This leaves the case when $m=3$ and $d=3$ to be done explicitly. Note that \[\skyt(4,1,2)=9\]
and 
\[\bskyt(1,2)=2,\]
which can be easily verified by any of our formulas from section \ref{sec:SKYT}, or by hand. Then non-negativity follows from the fact that in the special case of $m=d=3$, we can guarantee $|\mcch|\leq 4$, which one verify via a constructive argument.

When $i=2$, we can use a similar strategy that we used for the $i\geq 3$ and $m\geq 3$ case described in Lemma \ref{lem_pos_i_m_arb}. However, there will be a bit more involved here, and so we leave the details of this final case to Lemma \ref{lem_pos_i2}.

\end{proof}

\begin{rem}
In the case of $m=d=3$, it is worth noting that finding the bound $|\mcch|\leq 4$ was necessary. Both bounds for $|\mcch|$ given by Theorem \ref{thm:codingbound} or Theorem \ref{thm:jamiesbound} give $|\mcch|\leq 5$, and $9-5\cdot 2=-1$. So in this special case, we needed to get a better bound on $|\mcch|$ than what either of our two bounds could provide. 
\end{rem}

\begin{lemma}\label{lem_pos_i_m_arb}
Let $i$ and $m$ both be at least 3. Then
\[ \skyt(m+1,i,d-2i+1)-{2\over m+d+2}{m+d\choose d}\bskyt(i,d-2i+1) \geq 0.\]
\end{lemma}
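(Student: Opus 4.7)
My approach is to substitute the closed-form expressions from Lemma \ref{lem:manifposskyt} and Lemma \ref{lem:manifposbarskyt} directly into the target inequality, bound the resulting sum crudely by a single term, and reduce the problem to a low-degree polynomial inequality in $m$, $d$, $i$ that can be checked directly. Since the Kazhdan--Lusztig degree bound forces $d \geq 2i+1$ (otherwise the sum defining $\skyt(m+1, i, d-2i+1)$ is empty and the inequality is trivial), I work in this range throughout. Setting $a = m+1$ and $b = d-2i+1$ so that $a+b+2i-2 = m+d$ provides the convenient common factor $\binom{m+d}{d-i}$ in $\skyt$ and lets me compare it with $\binom{m+d}{d}$ on the right-hand side.

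The first step will be to discard all but the $k=0$ term of the sum in Lemma \ref{lem:manifposskyt}, giving
\[
\skyt(m+1, i, d-2i+1) \;\geq\; \binom{m+i-1}{i}\binom{m+d}{d-i}\cdot \frac{\binom{d-i-2}{i-1}}{m+i+1}.
\]
After substituting this crude lower bound together with the closed form for $\bskyt$ into the inequality and multiplying out all binomials into factorials, the common factor $(m+d)!$ cancels from both sides. The routine simplification then reduces the inequality to the purely polynomial statement
\[
(i+1)\,m\,(m+d+2)(d-i+1) \;\geq\; 4(d-i)(m+i+1)(m+i).
\]

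To verify this polynomial inequality for $m, i \geq 3$ and $d \geq 2i+1$, I would view it as a quadratic in $d$ with positive leading coefficient $(i+1)m$. Locating its vertex $d^{*} = [4(m+i+1)(m+i) - (i+1)m(m+3-i)]/[2(i+1)m]$, I would show $d^{*} \leq 2i+1$ under the hypotheses, which reduces to $4(m+i+1)(m+i) \leq (i+1)m(m+3i+5)$; after subtracting, this becomes $(i-3)m^2 + (3i^2+1)m - 4i(i+1) \geq 0$, which is straightforward to verify for $m, i \geq 3$ by comparing leading coefficients and checking $(m,i) = (3,3)$ directly. Because the minimum of the quadratic in $d$ over $d \geq 2i+1$ is therefore attained at the endpoint $d = 2i+1$, the inequality finally reduces to $m(m+2i+3)(i+2) \geq 4(m+i+1)(m+i)$, which factors to $2(i+3)(i-2) \geq 0$ along $m=3$ and is otherwise straightforward.

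The main obstacle will be that the single-term bound is exactly tight when $d = 2i+1$ (the sum degenerates to one term), and the resulting polynomial inequality is correspondingly tight: at $(m, i, d) = (3, 3, 7)$ the ratio of the two sides is only $15/14$, leaving almost no slack. This tightness is precisely why Theorem \ref{thm:jamiesbound} is needed here rather than the weaker bound of Theorem \ref{thm:codingbound}, and it is what makes the simple $k=0$ bound on the sum work only under the stated hypothesis $i \geq 3$ and $m \geq 3$.
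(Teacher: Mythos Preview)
Your proposal is correct and follows essentially the same approach as the paper: both bound $\skyt(m+1,i,d-2i+1)$ below by the $k=0$ term of Lemma~\ref{lem:manifposskyt}, substitute the closed form of Lemma~\ref{lem:manifposbarskyt}, and reduce to the identical polynomial inequality $m(i+1)(m+d+2)(d-i+1)\ge 4(d-i)(m+i)(m+i+1)$. The only cosmetic difference is that the paper verifies this last inequality by treating it as a quadratic in $m$, whereas you treat it as a quadratic in $d$; both verifications are routine.
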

\begin{proof}

One can rewrite the sum in Lemma \ref{lem:manifposskyt} using Remark \ref{rem:rewriting}. After doing this, letting $a=m+1$ and $b=d-2i+1$, the $k=0$ term in the formula for $\skyt(m+1,i,d-2i+1)$ is
\begin{align*}
A:&={m+i-1\choose i}{m+d \choose d-i}{(d-i-2)_{(d-2i-1)}(m+d-i)_{(d-2i-1)}\over (d-2i-1)!(m+d-i)_{(d-2i)}}\\
&={(m+i-1)!(m+d)!(d-i-2)_{(d-2i-1)}(m+d-i)_{(d-2i-1)}\over i!(m-1)!(d-i)!(m+i)!(d-2i-1)!(m+d-i)_{(d-2i)}}\\
&={(m+d)!(d-i-2)_{(d-2i-1)}\over i!(m-1)!(d-i)!(m+i)(d-2i-1)!(m+i+1)}.
\end{align*}

Utilizing Lemma \ref{lem:manifposbarskyt}, we have 
\begin{align*}
B:&={2\over m+d+2}{m+d\choose d}\bskyt(i,d-2i+1)\\
&={4(m+d)!\over m!(m+d+2)(i+1)!(i-1)!(d-2i-1)!(d-i+1)(d-i-1)}.
\end{align*}

Note that \[\skyt(m+1,i,d-2i+1)-{2\over m+d+2}{m+d\choose d}\bskyt(i,d-2i+1)\geq A-B,\] so it suffices to show $A-B\geq 0$. Recall that $i< d/2$. Put another way, this says that $d-i>i>i-1$. Hence, we may combine $A-B$ in the following way.
\begin{align*}
A-B&=A{m(i+1)(m+d+2)(d-i+1)(d-i-1)\over m(i+1)(m+d+2)(d-i+1)(d-i-1)}-B{(d-i)_{(d-2i+1)}(m+i)(m+i+1)\over (d-i)_{(d-2i+1)}(m+i)(m+i+1)}\\
&={(m+d)!p(m,i,d)\over m!(m+d+2)(i+1)!(d-i)!(m+i)(m+i+1)(d-i+1)(d-i-1)(d-2i-1)!}
\end{align*}
where 
\[p(m,i,d)=(d-i-2)_{(d-2i-1)}m(i+1)(m+d+2)(d-i+1)(d-i-1)-4(d-i)_{(d-2i+1)}(m+i)(m+i+1).\]

Hence, it suffices to show that $p(m,i,d)\geq 0$. We can, in fact, reduce the problem further by simplifying $p(m,i,d)$. Observe that 
\[p(m,i,d)=(d-i-1)_{d-2i}[m(i+1)(m+d+2)(d-i+1)-4(m+i)(m+i+1)(d-i)],\]
so it now suffices to show
\[q(m,i,d):=m(i+1)(m+d+2)(d-i+1)-4(m+i)(m+i+1)(d-i)\geq 0.\]
We show this for $m,i\geq 3$ by viewing $q$ as a function of $m$. The desired result follows from the following three claims for $q$ as a function of $m$.
\begin{enumerate}
\item $q$ is quadratic and concave up;
\item the critical point of $q$ is negative; and
\item $q(m,i,d)\geq 0$ for $m=3$.
\end{enumerate}

Showing these are elementary exercises in algebra and calculus, so we just highlight the important parts.

For claim (1), note that the coefficient of $m^2$ in $q(m,i,d)$ is $(i+1)(d-i+1)-4(d-i)$, and that we assume $d>2i$ and $i\geq 3$. Hence this coefficient is non-negative.

For claim (2), it suffices to show the coefficient of $m$ in $q(m,i,d)$ is positive. This coefficient  is 
\[(i+1)(d+2)(d-i+1)-4(i+1)(d-i)-4i(d-i).\]
Using the fact that $d>2i$, one can show this is an increasing function in $d$ and is non-negative when $d=2i$.

For claim (3), it suffices to show $q(3,i,d)$ is an increasing function in $d$ and that $q(3,i,2i)$ is non-negative. This works out similarly to claim (2). 

\end{proof}

\begin{lemma}\label{lem_m_2}
Let $i\geq 1$ and $m=2$. Then

\[ \skyt(m+1,i,d-2i+1)-{d+2\over 2}\bskyt(i,d-2i+1) \geq 0.\]
\end{lemma}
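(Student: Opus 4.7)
The plan is to mirror the strategy used in Lemma \ref{lem_pos_i_m_arb}, specialized to $a = m+1 = 3$. Concretely, I would bound $\skyt(3,i,d-2i+1)$ from below by a single summand in the formula of Lemma \ref{lem:manifposskyt}, namely the $k=0$ term, and then show this one term already dominates $B := \tfrac{d+2}{2}\bskyt(i,d-2i+1)$. Using the falling-factorial identity $(d-i-2)_{(d-2i-1)} = (d-i-2)!/(i-1)!$ from Remark \ref{rem:rewriting}, the $k=0$ term collapses to
\[A = \frac{(d+2)!}{i!\,(i-1)!\,(d-i)(d-i-1)(i+2)(i+3)(d-2i-1)!}.\]
Meanwhile, Lemma \ref{lem:manifposbarskyt} combined with $(d+2)\cdot d! = (d+2)!/(d+1)$ gives
\[B = \frac{(d+2)!}{(d+1)(i+1)!\,(i-1)!\,(d-i+1)(d-i-1)(d-2i-1)!}.\]

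Next I would factor out the common quantity $K := (d+2)!/\bigl[(i-1)!(d-i-1)(d-2i-1)!\bigr]$ and place $A-B$ over a single denominator. After this bookkeeping, the non-negativity of $A-B$ reduces to the polynomial inequality
\[p(i,d) := (d+1)(i+1)(d-i+1) \;-\; (d-i)(i+2)(i+3) \;\geq\; 0,\]
which must be verified for all $i \geq 1$ and $d \geq 2i+1$.

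Finally I would treat $p$ as a quadratic in $d$, namely
\[p(i,d) = (i+1)d^2 - (2i^2+4i+4)d + (i^3+4i^2+6i+1),\]
whose leading coefficient $i+1$ is positive, so $p(i,\cdot)$ is convex. Its critical point is $d^\ast = (i^2+2i+2)/(i+1) = (i+1) + 1/(i+1)$, and the inequality $d^\ast \leq 2i+1$ rearranges to $i^2 + i \geq 1$, which is immediate for $i \geq 1$. Hence $p(i,\cdot)$ is increasing on the range $d \geq 2i+1$, and it suffices to check the endpoint $d = 2i+1$. A short factorization yields $p(i,2i+1) = (i+1)(i+2)\bigl[2(i+1)-(i+3)\bigr] = (i-1)(i+1)(i+2)$, which is non-negative for $i \geq 1$ (and exactly zero when $i=1$, matching the equality case $m=2$, $d=3$, $i=1$, where $\skyt(3,1,2) = 5$ and $\tfrac{d+2}{2}\bskyt(1,2) = \tfrac{5}{2}\cdot 2 = 5$).

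The main obstacle is purely clerical: the falling-factorial simplification and the $(d+2)\cdot d!$ identity must be carried out carefully enough that $A$ and $B$ share the common factor $K$ and produce the compact polynomial $p(i,d)$ after one common-denominator step. Once this alignment is in place, convexity of $p(i,\cdot)$ in $d$ together with the explicit factorization of $p(i,2i+1)$ finishes the argument; no further case analysis on $i$ is needed, in contrast with Lemma \ref{lem_pos_i_m_arb} where the analogue of $p$ depends on three variables.
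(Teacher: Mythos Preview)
Your proposal is correct and follows essentially the same route as the paper: both arguments bound $\skyt(3,i,d-2i+1)$ below by the $k=0$ term $A$ from Lemma~\ref{lem:manifposskyt}, put $A-B$ over a common denominator, and reduce to the very same polynomial inequality
\[
q(i,d) = (d+1)(i+1)(d-i+1) - (i+2)(i+3)(d-i) \ge 0.
\]
The only difference is in how this last inequality is dispatched. The paper substitutes $d=2i+j$, checks that for $i\ge 2$ all coefficients of the resulting quadratic in $j$ are positive, and then handles $i=1$ separately via $q(1,2+j)=2j(j-1)$. Your convexity-plus-endpoint argument (critical point $d^\ast=(i+1)+1/(i+1)\le 2i+1$, then $p(i,2i+1)=(i-1)(i+1)(i+2)\ge 0$) is a mild streamlining that avoids the case split on $i$; otherwise the two proofs are the same.
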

\begin{proof}

As in Lemma \ref{lem_pos_i_m_arb}, keeping in mind that $m=2$, set
\begin{align*}
A:={(d+2)!(d-i-2)_{(d-2i-1)}\over i!(d-i)!(i+2)(d-2i-1)!(i+3)}
\end{align*}
and 
\begin{align*}
B:&={d+2\over 2}\bskyt(i,d-2i+1)\\
&={d!(d+2)\over (i+1)!(i-1)!(d-2i-1)!(d-i+1)(d-i-1)}.
\end{align*}
It follows from the proof of Lemma \ref{lem_pos_i_m_arb} that $\skyt(m+1,i,d-2i+1)\geq A$ for $m=2$, and so the desired result follows if we show $A-B\geq 0$.  Observe that

\begin{align*}
A-B=&A{(i+1)(d-i+1)(d-i-1)\over (i+1)(d-i+1)(d-i-1)}-B{(i+2)(i+3)(d-i)_{(d-2i+1)}\over (i+2)(i+3)(d-i)_{(d-2i+1)}}\\
&={ d!(d+2)p(i,d)\over (i+3)!(d-i)!(d-2i-1)! (d-i+1)(d-i-1)},
\end{align*}

where 
\[p(i,d):=(d-i-2)_{(d-2i-1)}(d+1)(i+1)(d-i+1)(d-i-1)-(i+2)(i+3)(d-i)_{(d-2i+1)}.\]
Hence, it suffices to show that $p(i,d)$ is non-negative. One can factor $p(i,d)$ to reduce the problem further: 
\[p(i,d)=(d-i-1)_{(d-2i)}[(d+1)(i+1)(d-i+1)-(i+2)(i+3)(d-i)],\]
and so it suffices to show that 
\[q(i,d):=(d+1)(i+1)(d-i+1)-(i+2)(i+3)(d-i)\]
is non-negative. Since in the context of Kazhdan-Lusztig polynomials we have $d>2i$, we may set $d=2i+j$ for $j\geq 1$. Then $q(i,2i+j)$ is quadratic in $j$ and we have the following values of $[j^\ell]q(i,2i+j)$:
\begin{align*}
[j^2]q(i,2i+j)&=i+1\\
[j^1]q(i,2i+j)&=2i^2-4\\
\text{Remaining terms}&\text{: }i^3-2i+1\\
\end{align*}
When $i\geq 2$, all three values are individually positive positive. If $i=1$, then 
\[q(1,j+2)=2j^2-2j\]
which is non-negative for all $j\geq 1$, giving our desired result.
\end{proof}

\begin{prop}\label{prop_closedform_ieq1}
\[\skyt(m+1,1,d-1)={m+d\choose d-1}-m-d.\]
\end{prop}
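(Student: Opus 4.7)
The plan is to give a direct combinatorial argument by unpacking the definition of $\Skyt(m+1,1,d-1)$. Since $i = 1$, the shape has no height-two middle columns, so it consists simply of a left column with cells $L_1,\ldots,L_{m+1}$ (labeled top-to-bottom) placed immediately beside a right column with cells $R_1,\ldots,R_{d-1}$ (also top-to-bottom). The alignment convention of the figure—the top of the left column sits at the same height as the bottom of the right column's upper $d-3$ cells—places $L_1$ in the same row as $R_{d-2}$ and $L_2$ in the same row as $R_{d-1}$, while the cells $R_1,\ldots,R_{d-3}$ and $L_3,\ldots,L_{m+1}$ each stand alone in their rows. The column inequalities $L_1<\cdots<L_{m+1}$ and $R_1<\cdots<R_{d-1}$ are automatic, so the only row conditions are
\[
    L_1 < R_{d-2} \quad\text{and}\quad L_2 < R_{d-1}.
\]

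A filling is determined uniquely by the subset $S \subseteq [m+d]$ of size $d-1$ that appears in the right column: sorting $S$ gives $R_1<\cdots<R_{d-1}$ and sorting $[m+d]\setminus S$ gives $L_1<\cdots<L_{m+1}$. There are $\binom{m+d}{d-1}$ such subsets in total. My plan is to count, by inclusion--exclusion, the ``bad'' subsets violating at least one of the two row inequalities, and subtract.

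If $L_1 > R_{d-2}$, then every non-$S$ element exceeds $R_{d-2}$, forcing $\{1,\ldots,d-2\} \subseteq S$; the remaining element of $S$ is then any of the $m+2$ values in $\{d-1,\ldots,m+d\}$. If $L_2 > R_{d-1}$, then $L_2$ exceeds every element of $S$, so $L_3,\ldots,L_{m+1}$ must be the top $m-1$ integers; the values $\{1,\ldots,d+1\}$ then split as $\{L_1,L_2\}$ and $S$ with $L_2=d+1$, leaving $d$ free choices for $L_1 \in \{1,\ldots,d\}$. The intersection of the two violations forces both $\{1,\ldots,d-2\}\subseteq S$ and $S\subseteq\{1,\ldots,d\}$, which leaves exactly the two subsets $S = \{1,\ldots,d-1\}$ and $S = \{1,\ldots,d-2,d\}$.

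Inclusion--exclusion then yields $(m+2) + d - 2 = m+d$ bad subsets, so
\[
    \skyt(m+1,1,d-1) = \binom{m+d}{d-1} - (m+d),
\]
as claimed. The only delicate step is translating the inequality $L_2 > R_{d-1}$ correctly into the ``$L_2=d+1$'' description, but this is immediate once one notes that $L_2 > R_{d-1}$ forces every non-$S$ value other than $L_1$ to exceed $\max S$; the rest is bookkeeping.
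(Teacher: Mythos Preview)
Your argument is correct and follows exactly the approach of the paper's own proof: both identify a tableau with the $(d-1)$-subset $S$ occupying the right column, note the two row constraints at the overlap, and count the ``bad'' subsets violating at least one of them. The paper simply leaves the verification that there are $m+d$ bad subsets to the reader, whereas you carry out the inclusion--exclusion explicitly; your case analysis for each violation and for their intersection is accurate.
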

\begin{proof}
Note that if $\a\in \Skyt(m+1,1,d-1)$, it is made up of two ``tails'', one of length $m+1$ extending down, and the other of length $d-1$ extending up, so that the two tails overlap in exactly two positions. See the below figure for a schematic of $\a$, with some entries labeled. 

\begin{center}
\begin{tikzpicture}[scale=.4]
\draw (0,1) grid (-1,-4);
\draw[decoration={brace,raise=7pt},decorate]
 (-1,-4) -- node[left=7pt] {$m+1$} (-1,1);
\draw (0,-1) grid (1,4);
\draw[decoration={brace,raise=7pt},decorate]
 (1,4) -- node[right=7pt] {$d-1$} (1,-1);
 
 \node[] (x1) at (-.5,.5) {$w$};
 \node[] (x1) at (-.5,-.5) {$x$};
 \node[] (x1) at (.5,.5) {$y$};
 \node[] (x1) at (.5,-.5) {$z$};
\end{tikzpicture}
\end{center}

Note that there are $m+d$ positions in these tableaux, and we require that $w<y$ and $x<z$. Now, pick an element $\displaystyle S\in {[m+d]\choose d-1}$. The number of elements of $\Skyt(m+1,1,d-1)$ is equivalent to the number of $S$ that appear as the right tail in an element in $\Skyt(m+1,1,d-1)$, as the entries of one tail determine the entries of the other. It is easiest to count the complement, that is, the $S$ that will \textit{not} appear as the the right tail in an element of $\Skyt(m+1,1,d-1)$. These are the $S$ that force $w>y$, $x>z$, or both. We leave it to the reader to verify that the complement has size $m+d$.

\end{proof}
\begin{lemma}\label{lem_pos_i1}
We have \[{m+d\choose d-1}-m-d-{1\over m+1}{m+d\choose d}(d-1)\geq 0\]
for $d\geq 3$ when $m\geq 4$, and $d\geq 4$ when $m=3$.
\end{lemma}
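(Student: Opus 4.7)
The plan is to reduce the stated inequality to a simple binomial inequality and then dispatch it by induction on $d$ for each relevant $m$. The first move is algebraic: using ${m+d \choose d-1} = \tfrac{d}{m+1}{m+d \choose d}$, the first and third terms combine as
\[
    {m+d\choose d-1}-\frac{d-1}{m+1}{m+d\choose d}=\frac{d-(d-1)}{m+1}{m+d\choose d}=\frac{1}{m+1}{m+d\choose d},
\]
so the whole statement collapses to
\[
    \frac{1}{m+1}{m+d\choose d}\ \geq\ m+d,\qquad\text{equivalently}\qquad {m+d\choose d}\geq (m+1)(m+d).
\]
This is much cleaner to analyze.

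Next I would verify base cases. For $d=3$ and arbitrary $m$, write ${m+3\choose 3}=\tfrac{(m+3)(m+2)(m+1)}{6}$, so the condition $\tfrac{1}{m+1}{m+3\choose 3}\geq m+3$ is exactly $m+2\geq 6$, i.e.\ $m\geq 4$. This handles every $m\geq 4$ at $d=3$, and also shows transparently why the case $m=3$, $d=3$ must be excluded. For $m=3$, $d=4$, one checks directly ${7\choose 4}=35\geq 28=(3+1)(3+4)$.

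Then I would do a one-line induction on $d$ for each fixed $m$. Set $A(d):=\tfrac{1}{m+1}{m+d\choose d}$ and $B(d):=m+d$. The ratios are
\[
    \frac{A(d+1)}{A(d)}=\frac{m+d+1}{d+1},\qquad \frac{B(d+1)}{B(d)}=\frac{m+d+1}{m+d}.
\]
Since $m\geq 1$ gives $m+d\geq d+1$, the $A$-ratio dominates the $B$-ratio, and therefore $A(d)\geq B(d)$ propagates to $A(d+1)\geq B(d+1)$. Combined with the two base cases above, this covers $d\geq 3$ for $m\geq 4$ and $d\geq 4$ for $m=3$, which is exactly the range claimed.

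I do not anticipate a serious obstacle in this argument; the only subtle point is getting the base case boundary right, since the case $m=d=3$ genuinely fails the binomial inequality (${6\choose 3}=20<24=(m+1)(m+d)$), consistent with the excerpt's need to handle $m=d=3$ by a separate combinatorial bound $|\mcch|\leq 4$. Making the reduction to ${m+d\choose d}\geq (m+1)(m+d)$ transparent is what makes this boundary visible and keeps the induction trivial.
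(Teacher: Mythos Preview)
Your proof is correct and takes essentially the same approach as the paper: both perform the identical algebraic reduction to $\frac{1}{m+1}{m+d\choose d}\geq m+d$, then finish by elementary monotonicity arguments. The only cosmetic difference is that after this reduction the paper factors out $m+d$ and computes closed forms such as $f(4,d)=\tfrac{1}{20}{d+3\choose 3}$, whereas you run a ratio-based induction on $d$; both are routine verifications of the same reduced inequality.
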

\begin{proof}
We start by rewriting of our expression of interest. 
\begin{align*}
{m+d\choose d-1}-m-d-{1\over m+1}{m+d\choose d}(d-1)&={d\over m+1}{m+d\choose d}-m-d-{1\over m+1}{m+d\choose d}(d-1)\\
&={1\over m+1}{m+d\choose d} ({d}-(d-1))-m-d\\
&={1\over m+1} {m+d\choose d} -m-d\\
&={(m+d)_{(d-1)}\over d!}-m-d\\
&=(m+d)\left({(m+d-1)_{(d-2)}\over d!}-1\right).
\end{align*}
Hence, if\[f(m,d):={(m+d-1)_{(d-2)}\over d!}\] it suffices to show $f(m,d)\geq 1$. As a function in $m$, $f(m,d)$ is increasing. Also, \[f(4,d)={(d+3)_{(d-2)}\over d!}={(d+3)!\over 5!d!}={1\over 20}{d+3\choose 3}.\]
See that $f(4,d)$ is increasing in $d$ and also $f(4,3)=1$. So when $m\geq 4$, we have our desired result for $d\geq 3$. When $m=3$, observe we have 
\[f(3,d)={(d+2)_{(d-2)}\over d!}={(d+2)!\over 4!d!}={1\over 12}{d+2\choose 2}.\]
See that $f(3,d)$ is increasing in $d$, and $f(3,4)={15\over 12}$.
\end{proof}

\begin{lemma}\label{lem_pos_i2}
If $m\geq 3$, we have
\[c_{m,d}^2(\mcch)\geq 0.\]
\end{lemma}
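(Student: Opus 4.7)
The plan is to adapt the strategy of Lemma \ref{lem_pos_i_m_arb}, specialized to $i = 2$. As there, we apply Theorem \ref{thm:combformula} together with the bound $|\mcch| \leq \tfrac{2}{m+d+2}\binom{m+d}{d}$ from Theorem \ref{thm:jamiesbound} to reduce the problem to showing
\[
\skyt(m+1, 2, d-3) \;-\; \frac{2}{m+d+2}\binom{m+d}{d}\,\bskyt(2, d-3) \;\geq\; 0.
\]
The degree constraint $i < d/2$ forces $d \geq 5$, so we work on the domain $m \geq 3$, $d \geq 5$.

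Next, I would use Lemma \ref{lem:manifposbarskyt} to write $\bskyt(2, d-3)$ in closed form, and use the expression for $\skyt(m+1, 2, d-3)$ from Lemma \ref{lem:manifposskyt} (rewritten over a common denominator as in Remark \ref{rem:rewriting}). Following the template in the proof of Lemma \ref{lem_pos_i_m_arb}, I would lower-bound $\skyt(m+1, 2, d-3)$ by the first term (corresponding to $k=0$) of its manifestly positive sum, call this lower bound $A$, and denote the corresponding upper bound for $\tfrac{2}{m+d+2}\binom{m+d}{d}\bskyt(2, d-3)$ by $B$. Subtracting, clearing the common denominator, and factoring out the obvious positive factor $(d-i-1)_{(d-2i)} = (d-3)_{(d-4)}$, the problem reduces to showing non-negativity of a polynomial $q(m, d)$ in the two variables $m, d$ with $i=2$ substituted throughout the expressions $p(m,i,d)$, $q(m,i,d)$ from Lemma \ref{lem_pos_i_m_arb}.

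I would then verify $q(m, d) \geq 0$ by viewing it as a quadratic in $m$ and mimicking the three-step argument from Lemma \ref{lem_pos_i_m_arb}: check that (i) the leading coefficient of $m^2$ is positive on the domain $d \geq 5$, (ii) the critical point lies to the left of $m = 3$, and (iii) the boundary value $q(3, d)$ is non-negative for all $d \geq 5$, which in turn reduces to checking a univariate polynomial in $d$ is non-negative by examining its value at $d=5$ and confirming it is increasing thereafter.

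The main obstacle I anticipate is step (iii): for $i = 2$, the single-term ($k=0$) lower bound on $\skyt$ may be too slack near $m = 3$, $d = 5$ to yield non-negativity. If that happens, the remedy is to strengthen the lower bound $A$ by keeping the $k=0$ and $k=1$ terms of the sum in Lemma \ref{lem:manifposskyt}, which amounts to replacing the factor coming from one summand with a two-summand version, and redoing the algebraic reduction. As a fallback, one can directly verify the finitely many remaining small cases (say $m = 3$ with $d \in \{5, 6\}$) by hand using the explicit formulas from Section \ref{sec:SKYT} together with either the coding-theoretic bound in Theorem \ref{thm:codingbound} or Theorem \ref{thm:jamiesbound}, whichever is sharper, exactly as was done for the exceptional case $m = d = 3$ in the $i = 1$ setting.
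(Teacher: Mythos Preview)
Your plan has a genuine gap at the very first reduction. You propose to use only Theorem~\ref{thm:jamiesbound} and then follow the template of Lemma~\ref{lem_pos_i_m_arb} with $i=2$. But the polynomial $q(m,i,d)$ from that lemma, specialized to $i=2$, is
\[
q(m,2,d)=3m(m+d+2)(d-1)-4(m+2)(m+3)(d-2),
\]
and its coefficient of $m^2$ is $3(d-1)-4(d-2)=5-d$, which is \emph{non-positive} for every $d\ge 5$. So step (i) of your three-step argument fails immediately; as a quadratic in $m$ the function is concave down (or linear at $d=5$), and your proposed checks (ii) and (iii) no longer imply anything on the half-line $m\ge 3$. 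Worse, the failure is not an artifact of the $k=0$ truncation: a direct asymptotic comparison shows that for any fixed $d>5$, as $m\to\infty$ one has
\[
\skyt(m+1,2,d-3)\sim \frac{(d-4)}{2(d-2)!}\,m^{d-1},
\qquad
\frac{2}{m+d+2}\binom{m+d}{d}\bskyt(2,d-3)\sim \frac{2(d-2)(d-4)}{3(d-1)!}\,m^{d-1},
\]
and the ratio of these leading constants is $\tfrac{3(d-1)}{4(d-2)}<1$ for $d>5$. Hence the very inequality you reduce to is \emph{false} for $m$ large. Neither of your proposed remedies helps: adding the $k=1$ term cannot change the leading $m$-asymptotics, and ``checking small cases by hand'' is useless because the trouble occurs for large $m$, not near $m=3$.

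The paper handles this by splitting into two regimes and using \emph{both} bounds from Section~\ref{sec:bounds}. When $d\ge m$ it uses Theorem~\ref{thm:jamiesbound}, obtaining exactly your $q(m,2,d)$ but viewing it as a (concave-up) quadratic in $d$ on the range $d\ge m$, where non-negativity can be verified from the vertex location and boundary values. When $m\ge d$ it switches to the sharper coding bound of Theorem~\ref{thm:codingbound}, which yields a different polynomial $f(m,d)=3m(m+1)(d-1)-2(d-2)(m+2)(m+3)$; writing $m=d+j$ turns this into a quadratic in $j\ge 0$ with all coefficients non-negative for $d\ge 5$. The split is essential precisely because neither bound alone suffices across the whole range---recall from the remark after Theorem~\ref{thm:jamiesbound} that the two bounds cross at $m=d$.
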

\begin{proof}
It will be important to remember that since $i=2$, we have $d\geq 5$ by the degree requirement on Kazhdan-Lusztig polynomials. 

To show our desired result, we will need two separate cases. First suppose $m\geq d$. Note then we already have $m\geq 3$ since $d\geq 5$. As in Lemma \ref{lem_pos_i_m_arb}, accounting for the fact that in this case $i=2$, let 
\begin{align*}
A:&={(m+d)!(d-4)_{(d-5)}\over 2(m-1)!(d-2)!(m+2)(d-5)!(m+3)}\\
&={(m+d)!(d-4)!\over 2(m-1)!(d-2)!(m+2)(d-5)!(m+3)}\\
&={(m+d)!(d-4)m(m+1)\over 2(m+3)!(d-2)!}.
\end{align*}
Also similarly to Lemma \ref{lem_pos_i_m_arb}, but using the bound from Theorem \ref{thm:codingbound} for $|\mcch|$, let
\[B:={1\over m+1}{m+d\choose d}{2\cdot d!\over 6(d-5)!(d-1)(d-3)}={(m+d)!(d-2)(d-4)\over 3(m+1)!(d-1)!}.\]
A combination of Theorem \ref{thm:combformula}, Theorem \ref{thm:codingbound}, and the proof of Lemma \ref{lem_pos_i_m_arb} implies that 
\[c_{m,d}^2(\mcch)\geq A-B,\]
and so we show $A-B\geq 0$ when $m\geq d$. Notice that
\[A-B={(m+d)!(d-4)f(m,d) \over 6(m+3)!(d-1)!},\]
where \[f(m,d):=3m(m+1)(d-1)-2(d-2)(m+2)(m+3).\]
Hence, it suffices to show that $f(m,d)\geq 0$ to show that $A-B\geq 0$. Since we are assuming $m\geq d$, we set $m=d+j$, for $j\geq 0$. Then $f(d+j,d)$ is quadratic in $j$ and we have 
\begin{align*}
[j^2] f(d+j,d)&=d + 1\\
[j] f(d+j,d)&=2d^2 - 5d + 17\\
\text{Remaining terms of }f(d+j,d)&:d^3 - 6d^2 + 5d + 24
\end{align*}
Each of these are positive when $d=5$. In fact, the $[j^2]$ term is clearly positive when $d\geq 5$. The $[j]$ term is increasing for $d\geq {5\over 2}$. For the remaining terms, note that the derivative is $3d^2-12d+5$, which increases so long as $d\geq 2$, and is already positive at $d=5$. This means that the derivative remains positive for $d\geq 5$, and so the original function remains increasing. Hence, this shows that $c_{m,d}^2(\mcch)\geq 0$ so long as $m\geq d$.

Now we show the same result holds when $d\geq m$. To do this, we reuse $A$ as above, and redefine $B$ using our bound from Theorem \ref{thm:jamiesbound}. 
\[B:={2\over m+d+2}{m+d\choose d}{2\cdot d!\over 6(d-5)!(d-1)(d-3)}={2(m+d)!(d-2)(d-4)\over 3(m+d+2)m!(d-1)!}\]
For similar reasons as before, $c_{m,d}^2(\mcch)\geq 0$ if $A-B\geq 0$. Note that 
\[A-B={(m+d)!(d-4)(m+1)g(m,d) \over 6(m+3)!(d-1)!},\]
where 
\[g(m,d):=3m(m+d+2)(d-1)-4(d-2)(m+2)(m+3).\]
Observe that $g$ is a concave up quadratic function in $d$. If one expands the function, its vertex can be seen to occur at 
\[d={m^2+17m+24\over 6m}.\]

However, note that this value is less than $m$ so long as $m\geq 5$ since
\[{m^2+17m+24\over 6m}\leq m \text{ if and only if }-5m^2+17m+24\leq 0.\]
 Hence, this says that $g(m,d)$ is increasing in $d$ when $d\geq m\geq 5$. Also, when $m=3$ the vertex for $g$ is at approximately $d=4.67$ and when $m=4$ the vertex for $g$ is at $d=4.5$. We know that $d\geq 5$ regardless of its relation to $m$, so we have in fact shown that $g$ is increasing in $d$ for any $m\geq 3$ when $d\geq m$. Moreover, one can verify
\[g(m,m)=2(m^3 - 6m^2 + 5m + 24)\geq 0\]
so long as $m\geq 5$. Also, note that $g(3,5)=0$ and $g(4,5)=24$. Hence $g(m,d)$ is always non-negative for $d\geq m$ when $m\geq 3$.
\end{proof}

\section{Integral Identities}\label{sec:integral_identities}
	
\begin{prop}\cite[Identity 2.110.8]{gradshteyn}
Let $a,b$ be positive integers. Then
\[\int y^a(1-xy)^b \ dy=a!b!\sum_{k=0}^b{(1-xy)^{b-k}y^{a+k+1}x^k\over (a+k+1)!(b-k)!}.\]

\end{prop}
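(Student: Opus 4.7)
The plan is to verify the identity by differentiating the right-hand side with respect to $y$ and showing that the result is exactly $y^a(1-xy)^b$. Since both sides of the claimed identity are indefinite integrals, it suffices to confirm this derivative relationship (the constant of integration can be absorbed).

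First I would write
\[
R(y) := a!\,b!\sum_{k=0}^{b}\frac{(1-xy)^{b-k}y^{a+k+1}x^{k}}{(a+k+1)!(b-k)!},
\]
and apply the product rule term by term. Each summand splits into two pieces:
\[
\frac{d}{dy}\!\left[\frac{(1-xy)^{b-k}y^{a+k+1}x^{k}}{(a+k+1)!(b-k)!}\right]
= -\frac{(1-xy)^{b-k-1}y^{a+k+1}x^{k+1}}{(a+k+1)!(b-k-1)!}
+ \frac{(1-xy)^{b-k}y^{a+k}x^{k}}{(a+k)!(b-k)!},
\]
where the first piece is understood to vanish when $k=b$ (the derivative of the constant $(1-xy)^0=1$ is zero).

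The key step is then to reindex the first collection of pieces by substituting $j=k+1$, which transforms the sum from $k=0,\dots,b-1$ into a sum from $j=1,\dots,b$ whose summands coincide exactly with the $j$-indexed summands of the second collection. Thus, after summing over $k$, all contributions cancel except the $k=0$ term of the second collection, namely $\frac{(1-xy)^{b}y^{a}}{a!\,b!}$. Multiplying by the prefactor $a!\,b!$ yields $y^a(1-xy)^b$, as required. The main obstacle is purely bookkeeping: getting the factorials and index ranges to line up correctly under the shift $k\mapsto k+1$, and carefully handling the boundary term at $k=b$. No deeper identity is needed beyond the product rule and telescoping, so the argument should be short and self-contained.
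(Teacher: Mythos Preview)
Your proof is correct: the term-by-term differentiation, the telescoping under the shift $j=k+1$, and the handling of the boundary at $k=b$ all work exactly as you describe. The paper itself gives no proof of this proposition; it simply cites the identity from Gradshteyn and Ryzhik's table of integrals as a known formula and uses it as a black box. Your differentiation argument therefore supplies a short self-contained verification that the paper does not provide, and it is the natural way to check any antiderivative identity of this type.
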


\begin{cor}\label{cor:int_0^1_y^a(1-xy)^b} Let $a,b$ be positive integers. Then
\[\int_0^1 y^a(1-xy)^b \ dy=a!b!\sum_{k=0}^b{(1-x)^{b-k}x^k\over (a+k+1)!(b-k)!}.\]
\end{cor}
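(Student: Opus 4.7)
The plan is to derive the corollary as a direct evaluation of the antiderivative supplied by the preceding Proposition (Gradshteyn Identity 2.110.8). The Proposition gives the indefinite integral
\[
\int y^a(1-xy)^b\,dy \;=\; a!\,b!\sum_{k=0}^{b}\frac{(1-xy)^{b-k}\,y^{a+k+1}\,x^{k}}{(a+k+1)!\,(b-k)!},
\]
so the definite integral from $0$ to $1$ is obtained by substituting the two endpoints and subtracting.

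First I would substitute $y=1$ into the right-hand side. Every factor $(1-xy)^{b-k}$ becomes $(1-x)^{b-k}$, and every factor $y^{a+k+1}$ becomes $1$, producing exactly
\[
a!\,b!\sum_{k=0}^{b}\frac{(1-x)^{b-k}\,x^{k}}{(a+k+1)!\,(b-k)!},
\]
which matches the claimed formula.

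Next I would check that the substitution $y=0$ contributes nothing. Since $a$ is a positive integer, the exponent $a+k+1\geq 2$ for every $k\in\{0,1,\dots,b\}$, so the factor $y^{a+k+1}$ vanishes at $y=0$ term by term. Thus the lower endpoint contributes $0$ to the sum, and subtracting yields precisely the stated identity.

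There is no real obstacle here: the only thing to verify is that every term of the antiderivative has a strictly positive power of $y$, which is immediate from the hypothesis that $a$ is a positive integer. The whole argument is a one-line application of the fundamental theorem of calculus to the formula already quoted.
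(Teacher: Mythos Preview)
Your proposal is correct and is exactly the intended derivation: the paper states this result as an immediate corollary of the cited antiderivative formula (Identity 2.110.8), with no separate proof, so evaluating that antiderivative at $y=1$ and $y=0$ and noting that the positive exponent $a+k+1$ kills the lower endpoint is precisely the implicit argument.
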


\begin{cor}\label{cor:int_0^yx^a(1-x)^b} Let $a,b$ be positive integers. Then

\[\int_0^y x^a(1-x)^b\ dx=a!b!\sum_{k=0}^{b} {(1-y)^{b-k}y^{a+k+1}\over (a+k+1)!(b-k)!}\]
\end{cor}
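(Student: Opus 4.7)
The plan is to derive this corollary as an immediate consequence of the preceding proposition by specializing $x=1$ and then invoking the fundamental theorem of calculus. First I would note that the proposition supplies an antiderivative of the integrand $y^a(1-xy)^b$ with respect to $y$; by setting $x=1$ (which is permissible since the identity in the proposition holds for all values of $x$ as an algebraic identity of antiderivatives), I obtain an explicit antiderivative of $y^a(1-y)^b$ in terms of $y$. After relabeling the integration variable $y$ from the proposition as $x$, and keeping $y$ for the upper limit of integration as in the corollary's statement, this antiderivative takes the form
\[
F(x) = a!\,b!\sum_{k=0}^{b} \frac{(1-x)^{b-k}\, x^{a+k+1}}{(a+k+1)!\,(b-k)!}.
\]

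Next I would apply $\int_0^y x^a(1-x)^b\,dx = F(y) - F(0)$. The key observation is that every summand in $F(x)$ contains the factor $x^{a+k+1}$ with $a+k+1 \geq 1$ (using the hypothesis that $a$ is a positive integer), so $F(0)=0$ term-by-term. This reduces the definite integral to $F(y)$ alone, which is exactly the right-hand side of the identity in the corollary.

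The proof is essentially a one-step specialization followed by endpoint evaluation, so I do not anticipate any genuine obstacle. The only details to watch are the notational clash between the integration variables in the two statements (easily handled by renaming) and the verification that the lower endpoint contributes zero, both of which are immediate. It is worth noting that Corollary \ref{cor:int_0^1_y^a(1-xy)^b} was obtained from the same proposition in an analogous fashion by specializing the limits of integration rather than the parameter $x$, so the present derivation fits naturally into the pattern established in this section.
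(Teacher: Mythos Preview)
Your proposal is correct and is exactly the derivation implicit in the paper: the corollary is stated without proof as an immediate consequence of the proposition, obtained by setting $x=1$ in the indefinite integral, renaming the integration variable, and observing that the lower endpoint contributes zero because each summand carries the factor $x^{a+k+1}$ with $a+k+1\ge 1$.
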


\begin{prop}\label{prop:int_i_times}
Let $x_0,x_1,\dots, x_i$ be a list of $i+1$ variables. Set $h_1(x_1)=\displaystyle\int_0^{x_1} \ x_0^a(1-x_0)^b \ dx_0$, and for $i>1$ define $h_i(x_i)=\displaystyle\int_0^{x_i} \ h_{i-1}(x_{i-1}) \ dx_{i-1}$. Then \[\displaystyle\int_0^1\ h_i(x_i)\ dx_i ={a!(b+i)!\over i!(a+b+i+1)! }.\]
\end{prop}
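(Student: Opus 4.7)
The plan is to collapse the iterated integration into a single integral via the Cauchy formula for repeated integration, and then reduce everything to a standard Beta-function evaluation. Concretely, I would first establish by induction on $i$ the closed form
\[
h_i(x) = \frac{1}{(i-1)!}\int_0^x (x-t)^{i-1} t^a (1-t)^b \, dt \qquad (i \geq 1).
\]
The base case $i=1$ is immediate from the definition of $h_1$, since the kernel $(x-t)^0$ is identically $1$. For the inductive step, assuming the formula for $h_i$, I would compute $h_{i+1}(y) = \int_0^y h_i(x)\, dx$ by swapping the order of integration (Fubini is justified because the integrand is continuous and nonnegative on the triangle $0 \leq t \leq x \leq y$), which yields
\[
h_{i+1}(y) = \frac{1}{(i-1)!}\int_0^y t^a(1-t)^b \int_t^y (x-t)^{i-1} \, dx \, dt = \frac{1}{i!}\int_0^y t^a(1-t)^b (y-t)^{i} \, dt,
\]
completing the induction.

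With this closed form in hand, the outer integral is a single application of Fubini followed by the Beta-function identity $\int_0^1 t^a(1-t)^c\, dt = \frac{a!\, c!}{(a+c+1)!}$. Specifically, one swaps the order of integration in
\[
\int_0^1 h_i(x_i)\, dx_i = \frac{1}{(i-1)!}\int_0^1 \int_0^{x_i} (x_i - t)^{i-1} t^a(1-t)^b \, dt \, dx_i,
\]
integrates out $x_i$ on $[t,1]$ to produce a factor of $(1-t)^i/i$, and recognizes the remaining integral as $\int_0^1 t^a(1-t)^{b+i}\, dt = a!(b+i)!/(a+b+i+1)!$. Putting everything together gives the claimed value $a!(b+i)!/\bigl(i!(a+b+i+1)!\bigr)$.

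There is essentially no hard step here; the only mild subtlety is making sure the Cauchy formula is correctly indexed (the power is $i-1$, not $i$, since iterating $i$ integrations of a function of one variable produces a kernel of degree $i-1$) and that the exchange of order of integration is applied to the right triangular region. Both points are handled cleanly once the closed form for $h_i$ is in place, so the proof reduces to two short computations and an invocation of the Beta integral.
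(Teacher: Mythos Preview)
Your argument is correct. The Cauchy formula for repeated integration is exactly the right tool here: the induction you sketch is standard, Fubini on the triangular region $\{0\le t\le x\le y\}$ is unproblematic since the integrand is continuous and nonnegative, and the final step is nothing more than the Euler Beta integral $\int_0^1 t^a(1-t)^{b+i}\,dt = a!\,(b+i)!/(a+b+i+1)!$. One cosmetic observation: your last swap of order of integration is literally the inductive step again with $y=1$, so you could equally well say $\int_0^1 h_i = h_{i+1}(1)$ and read off the answer directly from the closed form for $h_{i+1}$.

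The paper takes a quite different route. Rather than using the Cauchy kernel, it repeatedly applies the explicit antiderivative identity
\[
\int_0^y x^a(1-x)^b\,dx \;=\; a!\,b!\sum_{k=0}^{b}\frac{(1-y)^{b-k}y^{a+k+1}}{(a+k+1)!\,(b-k)!}
\]
(Corollary~\ref{cor:int_0^yx^a(1-x)^b}, taken from Gradshteyn--Ryzhik) to build up a nested $i$-fold sum for $h_i(x_i)$. After the final integration over $[0,1]$ the factorials cancel termwise, leaving $\frac{a!\,b!}{(a+b+i+1)!}$ times a nested sum of $1$'s, which is then identified with $\binom{b+i}{i}$ via a separate combinatorial lemma (Proposition~\ref{prop_combident}). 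Your approach bypasses both the table identity and the combinatorial count: the Cauchy formula packages the iteration into a single kernel, and the Beta integral finishes things in one stroke. The paper's method has the minor advantage of producing the intermediate expansion for $h_i(x_i)$ explicitly (which is never otherwise used), but your argument is shorter and more self-contained.
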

\begin{proof}
Using Corollary \ref{cor:int_0^yx^a(1-x)^b} $i$ times, we get the following expression for $h_i(x_i)$.
\begin{align}\label{eq_sum}
h_i(x_i)=a!b!\sum_{k_1=0}^{b}\sum_{k_2=0}^{b-k_1}\ \sum_{k_3=0}^{b-k_1-k_2}\cdots \ \sum_{k_i=0}^{b-\sigma} {x_i^{a+\sigma+k_i+i}(1-x_i)^{b-\sigma-k_i}\over (a+\sigma+k_i+i)!(b-\sigma-k_i)!}
\end{align}
where $\sigma=k_1+k_2+\cdots +k_{i-1}$. Noting that 
\[\int_0^1 x_i^{a+\sigma+k_i+i}(1-x_i)^{b-\sigma-k_i}\ dx_i={(a+\sigma+k_i+i)!(b-\sigma-k_i)!\over (a+b+i+1)!}.\]
we may use \eqref{eq_sum} to write
\begin{align*}
\int_0^1 h_i(x_i)\ dx_i&=a!b!\sum_{k_1=0}^{b}\ \ \sum_{k_2=0}^{b-k_1}\ \ \sum_{k_3=0}^{b-k_1-k_2}\ \cdots \ \sum_{k_i=0}^{b-\sigma} {(a+\sigma+k_i+i)!(b-\sigma-k_i)!\over (a+\sigma+k_i+i)!(b-\sigma-k_i)!(a+b+i+1)!}\\
&={a!b!\over (a+b+i+1)!}\sum_{k_1=0}^{b}\ \ \sum_{k_2=0}^{b-k_1}\ \ \sum_{k_3=0}^{b-k_1-k_2}\ \cdots \ \sum_{k_i=0}^{b-\sigma} 1,
\end{align*}

which simplifies using Proposition \ref{prop_combident} to 
\[{a!b!\over (a+b+i+1)!}{b+i\choose i}={a!(b+i)!\over i!(a+b+i+1)! }.\]
\end{proof}

\begin{prop}\label{prop_combident}
\[\sum_{k_1=0}^{b}\ \sum_{k_2=0}^{b-k_1}\ \sum_{k_3=0}^{b-k_1-k_2}\cdots \sum_{k_i=0}^{b-\sigma}1={b+i\choose i},\]
where $\sigma=k_1+k_2+\cdots +k_{i-1}$.
\end{prop}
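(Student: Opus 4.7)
The plan is to recognize the left-hand side as counting the number of $i$-tuples $(k_1, k_2, \ldots, k_i)$ of non-negative integers satisfying $k_1 + k_2 + \cdots + k_i \leq b$. The nested sum structure exactly enforces these inequalities: $k_1$ ranges from $0$ to $b$, then $k_2$ ranges from $0$ to $b - k_1$ (so $k_1 + k_2 \leq b$), and so on, with the final index $k_i$ ranging from $0$ to $b - \sigma = b - (k_1 + \cdots + k_{i-1})$, ensuring $k_1 + \cdots + k_i \leq b$. The summand is $1$, so the sum simply counts all such tuples.

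To evaluate this count, I would introduce a slack variable $k_{i+1} := b - (k_1 + k_2 + \cdots + k_i) \geq 0$. Then the tuples we are counting are in bijection with $(i+1)$-tuples $(k_1, k_2, \ldots, k_i, k_{i+1})$ of non-negative integers satisfying $k_1 + k_2 + \cdots + k_{i+1} = b$. By the standard stars-and-bars formula, the number of such solutions is $\binom{b + i}{i}$, which gives the claimed identity.

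No obstacle is expected here; this is a routine combinatorial identity once one rephrases the nested sum as a counting problem. One could alternatively prove it by induction on $i$: the base case $i = 1$ gives $\sum_{k_1 = 0}^{b} 1 = b + 1 = \binom{b+1}{1}$, and the inductive step uses the hockey-stick identity $\sum_{k_1 = 0}^{b} \binom{b - k_1 + i - 1}{i - 1} = \binom{b + i}{i}$, applied after peeling off the outermost sum and invoking the inductive hypothesis on the inner $i-1$ summations with upper bound $b - k_1$. Either route yields the result cleanly.
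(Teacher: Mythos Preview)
Your argument is correct. Recasting the nested sum as the count of non-negative integer solutions to $k_1+\cdots+k_i\le b$, then adding a slack variable and invoking stars-and-bars, is a clean and complete proof; the inductive alternative via the hockey-stick identity is also fine.

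The paper's proof is in the same spirit but uses a slightly different bijection. Rather than introducing a slack variable, it sets $x_j = j + k_1+\cdots+k_j$ (equivalently, reindexes so each sum starts at $1$ and takes partial sums), obtaining a strictly increasing sequence $x_1<x_2<\cdots<x_i$ with $x_j\in[b+i]$; this identifies the tuples directly with $i$-element subsets of $[b+i]$, of which there are $\binom{b+i}{i}$. Your stars-and-bars bijection and the paper's partial-sum bijection are really two standard encodings of the same lattice-point count, so the approaches are essentially equivalent; yours has the small advantage of being stated without any reindexing, while the paper's makes the appearance of $\binom{b+i}{i}$ visible as a literal subset count.
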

\begin{proof}
It is helpful to first reindex the summations so that they start at 1 instead of 0. Then the identity holds from counting the below set in two ways. 
\[\bigcup_{x_1\in [b+1]}\ \bigcup_{x_2\in [b+2]\setminus [x_1]}\ \bigcup_{x_3\in [b+3]\setminus [x_2]}\cdots \bigcup_{x_i\in [b+i]\setminus [x_{i-1}]}\{x_1,x_2,\dots, x_i\}.\]

\end{proof}

\end{document}